\theoremstyle{definition}
\newtheorem{defi}{Definition}[section]
\newtheorem{ex}[defi]{Example}
\newtheorem{rem}[defi]{Remark}
\theoremstyle{plain}
\newtheorem{thm}[defi]{Theorem}
\newtheorem{cor}[defi]{Corollary}
\newtheorem{lmm}[defi]{Lemma}
\numberwithin{equation}{section}
\newcommand{\ve}{\varepsilon}
\newcommand{\R}{\mathbb R}
\newcommand{\E}{\textsf E}
\newcommand{\Sub}{\mathrm{Sub}}
\renewcommand{\P}{\mathbf{P}}
\renewcommand{\d}{\textrm{d}}
\newcommand{\Expect}{\mathsf{E}}
\begin{document}
\begin{center}
{\Large\bf Estimates for functionals of solutions to higher-order heat-type equations with random initial conditions}
 \end{center}

\begin{center}
{\large\bf Yu. Kozachenko\footnote{Department of Probability Theory, Statistics and Actuarial Mathematics, Taras Shevchenko National University of Kyiv, 64 Volodymyrska str., 01601, Kyiv,  Ukraine; ykoz@ukr.net}, E. Orsingher\footnote{Department of Statistical Sciences, Sapienza University of Rome, P.le Aldo Moro, 5, 00185, Rome, Italy; enzo.orsingher@uniroma1.it},
 L. Sakhno\footnote{Department of Probability Theory, Statistics and Actuarial Mathematics, Taras Shevchenko National University of Kyiv, 64 Volodymyrska str., 01601, Kyiv,  Ukraine; lms@univ.kiev.ua},
  O. Vasylyk\footnote{Department of Probability Theory, Statistics and Actuarial Mathematics, Taras Shevchenko National University of Kyiv, 64 Volodymyrska str., 01601, Kyiv,  Ukraine; ovasylyk@univ.kiev.ua, +38-097-877-28-70}}
\end{center}

\medskip

\begin{abstract}
\noindent In the present paper we continue the investigation of solutions to higher-order heat-type equations with random initial conditions, which play the important role in many applied areas. We consider the random initial conditions given by harmonizable $\varphi$-sub-Gaussian processes.  The main results are  the bounds for the distributions of the suprema over bounded and unbounded domains for solutions of such equations. The results obtained in the paper hold, in particular,  for the case of Gaussian initial condition.
\\ \\
\textit{Key words}: higher-order heat-type equations, random initial conditions, harmonizable processes, $\varphi$-sub-Gaussian processes, distribution of sumpremum
\end{abstract}

\section{Introduction}

Partial differential equations of the form
\begin{equation}\frac{\partial u}{\partial t} = L\,u,\label{1.1}
\end{equation}
where $L$ is a linear or nonlinear operator containing higher order spatial derivatives, often appear in the literature, since they can model many interesting phenomena in physics and other applied areas. Equations of this type represent higher order extensions of the heat equation and also are called  evolution equations. They are used, in particular, to describe the wave propagation in fluids, plasma and other media. Higher order nonlinear and/or linear terms can play significant role in adequate and accurate representation of complex physical systems.

The literature devoted to the study of this type of partial differential equations is rather voluminous, some references relevant to our study will be given below.  In recent decades, the subject has expanded even more intensively due to the discovery of interesting and fruitful probabilistic connections.

One famous equation is the Burgers equation
\begin{equation}\label{1.2}
\frac{\partial u}{\partial t}=\frac{1}{2}\frac{\partial^2 u}{\partial x^2}+u\frac{\partial u}{\partial x}
\end{equation}
which is a second order counterpart of the Korteweg-De Vries (KdV)  equation
\begin{equation}\label{1.3}
\frac{\partial u}{\partial t}=-\frac{\partial^3 u}{\partial x^3}+u\frac{\partial u}{\partial x}.
\end{equation}

The Burgers equation is mathematically tractable because the Cole-Hopf transformation reduces it to the heat equation. The KdV equation admits  a number of exact analytic solutions, in particular, the so-called soliton solution, and is important in fluid dynamics.
We also mention  that numerous recent studies are devoted to the higher-order generalizations of the KdV equation.

Other non-linear equations of the form \eqref{1.1} appear in the description of diffusions with branching and are called the Kolmogorov-Petrovskii-Piskunov equations (sometimes called also Fisher equations) and have the form
\begin{equation}\label{1.4}
\frac{\partial u}{\partial t}=\frac{1}{2}\frac{\partial^2 u}{\partial x^2}+u(1-u).
\end{equation}

The list of non-linear relevant partial differential equations is huge and includes also the fractional equations
\begin{equation}\label{1.5}
\frac{\partial u}{\partial t}=-(-\Delta)^\nu \, u.
\end{equation}
with $-(-\Delta)^\nu$ being the fractional Laplacian, related to stable processes.

The investigation of partial differential equations  (PDE) from the probabilistic point of view has been performed in several directions. From one hand,  it was shown that many  models of stochastic processes possess probability laws linked to various types of PDE's. On the other hand, many interesting results have been obtained for PDE subject to random initial conditions, random boundary conditions or external random forces.

 \vspace*{0.2cm}
\noindent{\it 1.1.  Pseudoprocesses related to higher-order heat-type equations} \vspace*{0.2cm}

Higher-order linear equations of the form
\begin{equation}\label{1.6}
\frac{\partial u}{\partial t}=c_{n}\frac{\partial ^{n}u}{\partial x^{n}%
}\;,\qquad n\ge 2
\end{equation}
have been analysed by many authors as possible extensions of the classical heat equation.

The fact that fundamental solutions to the equation \eqref{1.6} are sign-varying functions dates back to works by Bern\v{s}tein and L\'{e}vy.

Processes constructed as the Wiener processes by means of the fundamental solutions of \eqref{1.6} (called pseudoprocesses because of the sign-varying character of their measure densities) have been actively investigated starting from the works by Krylov \cite{Krylov}, Daletsky, Fomin \cite{Dal65}, \cite{Dal69}, Ladohin \cite{Ladohin}, Miyamoto \cite{M}, Hochberg \cite{H}.

More recently pseudoprocesses related to higher-order equations have been analysed and functionals connected with them have been evaluated by Orsingher \cite{Orsingher1991}, Hochberg and Orsingher \cite{HO}, \cite{HochbergOrsingher1996}, see also \cite{BHO}, and, in more systematic way, by Lachal \cite{Lachal2003}-\cite{Lachal2014} among others.
Fractional versions of the equation \eqref{1.6} with space derivatives of the Riemann-Liouville type have been studied by Orsingher and Toaldo \cite{Orsingher2014}, Smorodina, Faddeev, Platonova \cite{ISF}, \cite{Smorodina}, \cite{Platonova}.

Many of the papers mentioned above study the distributions of functionals of the pseudoprocess $X_t$, such as the sojourn time $\Gamma_t=\int_0^t I_{[0,\infty)} (X_s)ds$, the local times and others.

In some cases functionals like $\underset{0\le s\le t}{\max}  X_s$, $X(T_{a})$, $X(T_{a,b})$, where $T_{a}=\inf\{s: X_s\ge a\}$,  $T_{a,b}=\inf\{s: (X_s\ge b)\cup (X_s\le a)\}$,  permit us to go deeper into the sample behavior of the pseudoprocesses (see, for example,  some papers by Lachal \cite{Lachal2003}-\cite{Lachal2014}, Nishioka \cite{Nishioka1996}-\cite{Nishioka2001}).

Pseudoprocesses constructed as limit of pseudo random walks have been proposed recently by Nakajima and Sato \cite{NS} and before by Lachal \cite{Lachal2014}.

The general form of solutions of higher-order heat equations has been given by D'Ovidio and Orsingher in \cite{Orsingher2012}, where it was shown that the solutions of the equation \eqref{1.6} corresponding to even and odd orders are substantially different in their structure and behavior.
We note also that in some cases the processes corresponding to the  solutions to \eqref{1.6} can be represented as compositions of Brownian motions (see \cite{HochbergOrsingher1996}).

\vspace*{0.2cm}
\noindent{\it 1.2. Higher-order heat-type equations with random initial conditions}\vspace*{0.2cm}

Origins of the study of PDE with random initial conditions by means of rigorous probabilistic tools can be traced back to the paper by Kamp\'{e} de Feriet \cite{KF}. Since then several approaches have been developed for investigation of solutions for various classes of PDE
subject to random initial conditions or random external forces.

Particular attention in the literature has been devoted to the study of  rescaled solutions of the heat, fractional heat, Burgers and some other equations with Gaussian and non-Gaussian initial conditions possessing weak or strong dependence. We refer, for example, to \cite{AL99}, \cite{LO}, \cite{Leonenko}, among many others (see also references in the book \cite{Leonenko}), spectral properties of random fields arising as approximations of the rescaled solutions are presented, e.g., in \cite{ALS03}, \cite{ALS06}.

Another approach to the investigation of solutions of PDE subject to random initial conditions has been developed in papers \cite{BKK}, \cite{BeiK}, \cite{KK}, \cite{KS04}, \cite{KV} and some others. Namely, the applicability of Fourier methods was studied, conditions were obtained under which the solutions  can be represented by uniformly convergent series, and also the methods were developed for the approximation of solutions by means of partial sums of the corresponding series, along with the conditions of convergence of the mentioned approximations in different functional spaces.

Odd-order heat-type equations of the form \eqref{1.6} subject to random initial conditions (represented by stationary processes) were studied in \cite{BKLO}, namely, the asymptotic behavior was analysed for the rescaled solution to the linear KdV equation with weakly dependent random initial conditions.

More general odd-order equations of the form
\begin{equation}\label{1.8}
\frac{\partial u}{\partial t}=\sum_{k=1}^{N}a_{k}\frac{\partial ^{2k+1}u}{%
\partial x^{2k+1}}\;,\qquad N=1,2,...,
\end{equation}
subject to the random initial condition represented by  harmonizable processes were considered in \cite{BKOS}.
Rigorous conditions were stated therein for the existence of the solutions in two cases, where  the initial condition is represented:  (1) by a strictly $\varphi$-subGaussian harmonizable process and (2) by a stochastic integral with respect to a process with independent increments.

We mention that equations of the form \eqref{1.8}, that is, evolution equations with odd order spatial derivatives, are also called dispersive equations, we refer to \cite{BKOS} for some discussion on their importance  for different applications (see also references therein). We note that these equations include, as the simplest case, the Airy equation having in the right hand side only one term with third order derivative.

Note that real-world applications more often require consideration of equations \eqref{1.1}, which are non-linear, however the study of linear equations of the form \eqref{1.8}  represents many points of interest by itself and also gives an important key for understanding the related equations with non-linear terms and for the corresponding numerical computations.

In the present paper we continue the investigation of solutions $U(t,x)$
 to the equation \eqref{1.8} subject to random initial condition
\begin{equation}
u(0,x)=\eta(x),
 \label{intr.3}
\end{equation}
where $\eta(x)$ is a real-valued harmonizable $\varphi$-sub-Gaussian process.

Note that the results obtained in the paper hold, in particular,  for the case of Gaussian initial condition.

The properties of sub-Gaussianity and, more generally, $\varphi$-sub-Gaussianity, are important features of stochastic processes, since they permit us to evaluate different functionals of these processes, and, in particular,  the behavior of their suprema. The theory developed for these processes provides us with powerful techniques and tools suitable not only for asymptotic results, but also for deriving many useful bounds for distributions of these processes. The general theory of $\varphi$-sub-Gaussian processes and fields is presented in \cite{BK,GKN,K1984,KOP,KO,KST,VKY2005,VKY2008}, simulation methods are given in \cite{KR,KRP, VKY2008}.

Our main aim in the present paper is to establish upper bounds for the distribution of the supremum of a solution $U(t,x)$ to the equation \eqref{1.8}, that is,  for $P\big\{\sup\limits_{(t,x)\in A} U(t,x)>u\big\}$, where $A$ is a bounded domain. We also consider the behavior of $\sup \frac{U(t,x)}{c(t)}$ over an unbounded domain, where $c(t)$ is some continuous function.

It is well known that the important practical aspect of the evaluation of statistical properties of physical systems relies on the efficient representation of the relation between solutions of corresponding PDE and the random initial condition.
Under the assumption that the initial condition is given by a $\varphi$-sub-Gaussian process, it is possible to state the rigorous conditions of the existence of a solution. And moreover, as we show in the present paper, it is possible to evaluate directly the tails of the distribution of supremum of the solution. The bounds obtained in the paper demonstrate clearly the relation between the $\varphi$-sub-Gaussian initial condition (random input) given in terms of a certain Orlicz function $\varphi$, and the behavior of the solution (random output), bounds for distribution of which are written in terms of $\varphi$ and its convex conjugate $\varphi^*$.

The paper is organized as follows. In Sections 2 and 3 we present all important definitions and facts on harmonizable $\varphi$-sub-Gaussian processes, which will be used for derivation of the main results. In Section 4 we reformulate and specify (as appropriate for the present paper) some results from \cite{BKOS} on the conditions of existence of solutions to \eqref{1.8} with the initial condition \eqref{intr.3}.  The main results are the bounds for the distributions of supremum of the solutions presented in Sections 5 and 6, accompanied by some examples.

\section{Harmonizable processes}

In the paper we will consider the random initial conditions given by harmonizable $\varphi$-sub-Gaussian processes. We present in this section and in the next one the necessary definitions and facts on these processes.

\begin{defi}\cite{L}
The second-order random function $X=\{X(t),t\in \R\},$ $ \Expect
X(t)=0$,  is called harmonizable if there exists a second-order random
 function $y={y(t), t\in \R}$, $\Expect y(t)=0$ such that the covariance
$\Gamma_y (t,s)=\Expect y(t)\overline{y(s)}$ has finite variation and $X(t)=\int_{\R}e^{itu}\,\d y(u)$,
where the  integral is defined in the mean-square sense.
\end{defi}

\begin{thm}\cite{L}[Loev theorem]\label{Loev_thm}
    The second-order random function $X=\{X(t), t\in \R\},$ $ \Expect X(t)=0$,
is harmonizable if and only if there exists a covariance
function $\Gamma_y (u,v)$ with finite variation such that
    $$\Gamma_x(t,s)= \Expect X(t) \overline {X(s)}=
\int_{\R}\int_{\R}e^{i(tu-sv)}\,\d \Gamma_y (u,v).$$
\end{thm}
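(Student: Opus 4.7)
The plan is to handle the two directions of the equivalence separately, with the forward ($\Rightarrow$) direction being essentially a computation and the reverse ($\Leftarrow$) direction requiring a Hilbert-space construction of the representing process $y$.

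For the forward direction, I would start with the harmonizable representation $X(t)=\int_{\R}e^{itu}\,\d y(u)$, interpreted as a mean-square limit of Riemann--Stieltjes type sums $S_n(t)=\sum_k e^{it u_k^{(n)}}\bigl(y(u_{k+1}^{(n)})-y(u_k^{(n)})\bigr)$ along a sequence of partitions whose mesh tends to zero. Because each $S_n(t)\to X(t)$ in $L^2$, the product $S_n(t)\overline{S_m(s)}$ converges in $L^1$ to $X(t)\overline{X(s)}$, so $\Expect X(t)\overline{X(s)}=\lim_{n,m}\Expect S_n(t)\overline{S_m(s)}$. Expanding this expectation gives a double Riemann--Stieltjes sum with respect to $\Gamma_y$, which by the finite-variation hypothesis converges to $\int_{\R}\int_{\R}e^{i(tu-sv)}\,\d\Gamma_y(u,v)$. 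This yields the desired formula for $\Gamma_X$.

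For the reverse direction I would work with two Hilbert spaces. On the one hand, let $H(X)$ be the closed linear span of $\{X(t):t\in\R\}$ in $L^2(\Omega)$. On the other hand, form the Hilbert space $\mathcal H_y$ obtained by completing the linear span of functions $\{e^{it\cdot}:t\in\R\}$ with respect to the inner product $\langle f,g\rangle_y=\int_{\R}\int_{\R} f(u)\overline{g(v)}\,\d\Gamma_y(u,v)$, which is well defined because $\Gamma_y$ has finite variation and is positive definite. The assumed representation of $\Gamma_X$ gives $\langle e^{it\cdot},e^{is\cdot}\rangle_y=\Gamma_X(t,s)=\langle X(t),X(s)\rangle_{L^2(\Omega)}$, so the map $U:e^{it\cdot}\mapsto X(t)$ extends by linearity and continuity to a Hilbert-space isometry from $\mathcal H_y$ onto $H(X)$.

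I would then define the representing process by $y(u):=U(\mathbf 1_{(-\infty,u]})$, which is a second-order, mean-zero random function whose covariance equals $\Gamma_y$ by construction (since $U$ is an isometry and $\langle\mathbf 1_{(-\infty,u]},\mathbf 1_{(-\infty,v]}\rangle_y=\Gamma_y(u,v)$). It remains to verify that $X(t)=\int_{\R}e^{itu}\,\d y(u)$ in the mean-square sense: approximate $e^{itu}$ by simple functions $\sum_k e^{it u_k}\mathbf 1_{(u_k,u_{k+1}]}$ in $\mathcal H_y$, apply $U$, and note that the images are exactly the Riemann--Stieltjes approximating sums defining $\int e^{itu}\,\d y(u)$, whose limit is $U(e^{it\cdot})=X(t)$.

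The main obstacle I anticipate is the reverse direction: one has to be careful that $\Gamma_y$, which is only assumed to be a covariance of finite variation (not necessarily nonnegative as a set function on $\R\times\R$), still defines a genuine inner product on the span of exponentials, and that the isometric image $y(u)$ inherits both the finite-variation covariance and the measurability needed to make $\int e^{itu}\,\d y(u)$ a well-defined mean-square integral. The positive definiteness of $\Gamma_X$ pushed through the integral representation is what ultimately makes $\langle\cdot,\cdot\rangle_y$ a bona fide inner product, and this step is where the hypothesis of finite variation of $\Gamma_y$ is used most essentially.
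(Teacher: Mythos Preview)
The paper does not prove this theorem at all: it is simply quoted from Lo\`eve's book with the citation \cite{L}, so there is no ``paper's own proof'' to compare your proposal against.

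On the merits of the proposal itself: the forward direction is correct and is exactly the standard computation. The reverse direction has the right architecture (an isometry between a function space carrying the $\Gamma_y$-inner product and the time-domain space $H(X)$), but there is a genuine gap at the point where you set $y(u):=U(\mathbf 1_{(-\infty,u]})$. You defined $\mathcal H_y$ as the completion of the linear span of the exponentials $\{e^{it\cdot}\}$, and $U$ is only defined on that completion; to apply $U$ to $\mathbf 1_{(-\infty,u]}$ you must first show that this indicator lies in $\mathcal H_y$, i.e.\ that it can be approximated in the $\langle\cdot,\cdot\rangle_y$-norm by finite linear combinations of exponentials. This is not obvious --- finite combinations of $e^{it\cdot}$ are almost periodic, so they certainly do not approximate a half-line indicator uniformly or boundedly pointwise, and the finite-variation hypothesis on $\Gamma_y$ alone does not immediately give the required density. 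The issue you flag at the end (that $\langle\cdot,\cdot\rangle_y$ is a genuine inner product) is comparatively easy, since $\Gamma_y$ is assumed to be a covariance; the real obstacle is getting indicators into the domain of $U$.

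A cleaner route that avoids this density question: use Kolmogorov's theorem to build, on an auxiliary probability space, a process $\tilde y$ with covariance $\Gamma_y$; form $\tilde X(t)=\int e^{itu}\,\d\tilde y(u)$ (the mean-square integral exists by finite variation) and observe that $\tilde X$ and $X$ have the same covariance; let $V:H(\tilde X)\to H(X)$ be the resulting isometry; then project $\tilde y(u)$ orthogonally onto $H(\tilde X)$ before applying $V$, and check that the projected process still represents $\tilde X$ and still has a covariance of finite variation. This replaces the density step by a projection argument that is easier to control.
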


\begin{rem}
Integral of the type $\int\int\limits_{A}f(t,s)\d g(t,s)$ is a common Lebesgue-Stieltjes integral, that is, a limit of the
sum $\sum\sum f(t,s)\Delta_i\Delta_j g(t,s)$, and integral of the type $\int\int\limits_{A}f(t,s)|\d g(t,s)|$ is a limit
of the sum $\sum\sum f(t,s)|\Delta_i\Delta_j g(t,s)|$ \cite{L}.
\end{rem}

\begin{defi}\label{real_harmonizable}
Real-valued second order random function  $X=\{X(t),t\in \R\}$ is called harmonizable,
if there exists a real-valued second order function  $y(u)$,
$ \Expect y(u)=0,$ $u\in \R$,  such that $X(t)=\int_{-\infty}^{\infty} \sin tu \,\d y(u)$
or $X(t)=\int_{-\infty}^{\infty} \cos tu \,\d y(u)$
and the covariance function $\Gamma_y(t,s)=\Expect y(t)y(s)$ has finite variation. The integral above is defined
in the mean-square sense.
\end{defi}

\begin{rem}
 In what follows, it is enough to consider one of the
  representations  $X(t)=\int_{-\infty}^{\infty} \sin tu \,\d y(u)$
or $X(t)=\int_{-\infty}^{\infty} \cos tu \,\d y(u)$, because all
proofs are similar for both cases, as well as for the case
$X(t)=\int_{-\infty}^{\infty}(a \sin tu + b\cos tu) \,\d y(u)$,
where $a$ and $b$ are some real constants.
\end{rem}

From Theorem~\ref{Loev_thm} the following theorem follows.
\begin{thm}\label{Loev_thm_real}
    Real-valued second order function   $X = \{X(t), t\in \R \}$, $ \Expect X(t)=0 $, is harmonizable if and only if
    there exists the covariance function  $\Gamma_y(u,v)$ with finite variation such that
    $$\Gamma_x(t,s)=\Expect X(t)X(s)=\int_{\R} \int_{\R} \cos(t u) \cos(s v) \,\d\Gamma_y (u,v)$$ or
$$\Gamma_x(t,s)=\Expect X(t)X(s)=\int_{\R} \int_{\R} \sin(t u) \sin(s v) \,\d\Gamma_y (u,v)$$
    \end{thm}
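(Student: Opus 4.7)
My plan is to deduce Theorem \ref{Loev_thm_real} from Theorem \ref{Loev_thm} and the standard isometry property of the mean-square stochastic integral.

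For the necessity direction, I would start from Definition \ref{real_harmonizable}: by hypothesis $X(t)=\int_\R\sin(tu)\,\d y(u)$ in the mean-square sense for some real-valued zero-mean process $y$ whose covariance $\Gamma_y$ has finite variation. The finite variation of $\Gamma_y$ together with the boundedness of the integrands legitimises the bilinear identity
\begin{equation*}
\Expect\Bigl[\int_\R f(u)\,\d y(u)\cdot\int_\R g(v)\,\d y(v)\Bigr]=\int_\R\int_\R f(u)g(v)\,\d\Gamma_y(u,v),
\end{equation*}
and applying it with $f(\cdot)=\sin(t\,\cdot)$, $g(\cdot)=\sin(s\,\cdot)$ yields the announced formula for $\Gamma_x$. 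The cosine case is completely analogous, and by the remark following Definition \ref{real_harmonizable} the mixed $a\sin+b\cos$ representation is handled identically.

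For sufficiency, assume $\Gamma_x(t,s)=\int_\R\int_\R\sin(tu)\sin(sv)\,\d\Gamma_y(u,v)$ with $\Gamma_y$ a covariance of finite variation. The idea is to obtain the representation by passing through a complex harmonizable auxiliary process. Since $\Gamma_y$ is a genuine covariance, there exists a real-valued zero-mean process $y$ with this covariance, and for such $y$ the mean-square integral $Z(t):=\int_\R e^{itu}\,\d y(u)$ is well defined and complex harmonizable, with covariance $\int\int e^{i(tu-sv)}\,\d\Gamma_y(u,v)$ by Theorem \ref{Loev_thm}. Splitting $Z(t)=\int\cos(tu)\,\d y(u)+i\int\sin(tu)\,\d y(u)$, the imaginary part $\widetilde X(t):=\int\sin(tu)\,\d y(u)$ is real-valued and, by the necessity direction already established, has covariance exactly $\int\int\sin(tu)\sin(sv)\,\d\Gamma_y(u,v)=\Gamma_x(t,s)$. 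Since harmonizability is formulated in the mean-square sense, $X$ admits the desired representation with this $y$.

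The step I expect to require the most care is the final identification in the sufficiency argument: what is produced above is a process $\widetilde X$ with the correct covariance, and upgrading this to the mean-square identity $X(t)=\int\sin(tu)\,\d y(u)$ for the \emph{given} $X$ needs a short Hilbert-space argument. The assignment $\sin(t\,\cdot)\mapsto X(t)$ is well defined and isometric from the closed span of the sine functions (in the $L^2$-space whose inner product is induced by $\Gamma_y$) into the closed linear span of $\{X(t)\}$ in $L^2(\Omega)$; extending this isometry to indicators $I_{(-\infty,u]}$, after enlarging the probability space if necessary, yields a $y$ built directly from $X$. Once this identification is in place, the remaining computations reduce to the verification already performed in the necessity direction.
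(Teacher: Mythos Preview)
Your approach is correct and coincides with the paper's: the paper gives no detailed proof of Theorem~\ref{Loev_thm_real} at all, merely stating that it ``follows from Theorem~\ref{Loev_thm}'' (the complex Lo\`eve theorem). Your proposal supplies precisely the details one would expect behind that one-line deduction, including the isometry/extension argument needed to upgrade the sufficiency direction from ``some process with the right covariance is harmonizable'' to ``the given $X$ is harmonizable''.
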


\begin{rem} If in Theorem~\ref{Loev_thm_real} or in Definition~\ref{real_harmonizable} the process
$y(u)$ is a process with uncorrelated increments such that
    $$\Expect (y(a)-y(b))^2 =F(a)-F(b)$$
    for $a>b$, where $F(x)$ is a monotonically increasing left-continuous function
    such that $$F(-\infty) = \lim_{\substack{
x\rightarrow-\infty \\
    }}F(x)=0, \quad F(+\infty) = \lim_{\substack{
        x\rightarrow+\infty \\
}}F(x)<\infty$$
(spectral function),
then $$\Gamma_x (t,s)=\int_{\R}\cos(t u) \cos(s u) \,\d F(u) \quad \mbox{or}
\quad \Gamma_x (t,s)=\int_{\R}\sin( t u) \sin(s u) \,\d F(u)$$
\end{rem}

\begin{rem}
    Real-valued stationary processes  $X(t), t\in\R$, $ \Expect X(t)=0$, with continuous covariance function
    $$\Gamma (t,s)= \int_{-\infty}^{\infty}\cos ((t-s)u)\,\d F(u),$$
      where $F(u)$ is a spectral function, can be considered as a sum of two harmonizable processes
      $$X(t)=\int_{-\infty}^{\infty}\cos tu \,\d\eta_1 (u)+\int_{-\infty}^{\infty}\sin tu \,\d\eta_2(u)$$
      where $\eta_1(u)$ and $\eta_2(u)$ are uncorrelated processes with uncorrelated increments
      such that
      $\Expect (\eta_i(a)-\eta_i(b))^2=F(a)-F(b)$ for $a>b$.
\end{rem}

\section{$\varphi$-sub-Gaussian random variables and processes}

We present now a short overview of basic facts from the theory of $\varphi$-sub-Gaussian random variables and processes.

\begin{defi} \cite{BK,Kr.Rut}
Let $\varphi=\{\varphi(x),x\in\R\}$ be a continuous even convex
function. The function $\varphi$ is an Orlicz N-function if
$\varphi(0)=0, \varphi(x)>0$ as $x\neq0$ and the following
conditions hold:
$\lim_{\substack{
        x\rightarrow 0 \\
}}\frac{\varphi(x)}{x}=0,\ \ \lim_{\substack{
x\rightarrow \infty \\
}}\frac{\varphi(x)}{x}=\infty.$
\end{defi}

\begin{defi} \cite{BK,Kr.Rut}
Let $\varphi=\{\varphi(x),x\in\R\}$ be an N-function. The function $\varphi^*$
defined by
 $$\varphi^*(x)=\sup_{\substack{
       y\in \R\\
}}(xy-\varphi(y))$$
 is called the Young-Fenchel transform (or convex conjugate) of the function $\varphi$.
 \end{defi}

 \begin{rem}\cite{BK, Kr.Rut}
    The Young-Fenchel transform of an N-function is again an N-function and the following inequality holds
    (Young-Fenchel inequality):
    $$xy\leq\varphi(x)+\varphi^*(y)\quad \text{as}\quad x>0,y>0.$$
 \end{rem}

\noindent\textbf{Condition Q.}\cite{GKN,KO}
  Let $\varphi$ be an N-function which satisfies
    $\lim \inf_{\substack{
            x\rightarrow 0 \\
    }}\frac{\varphi(x)}{x^2}=c>0,$ where
 the case $c=\infty$ is possible.

\begin{ex} Examples of the N-functions, for which condition Q holds:
\begin{align}
&\varphi(x)=\frac{|x|^{\alpha}}{\alpha}, \quad 1 <\alpha\leq2,\nonumber\\
&\varphi(x)=
{ \begin{cases}
 \frac{|x|^{\alpha}}{\alpha}, \quad |x|\geq 1,\alpha > 2;
\\
 \frac{|x|^{2}}{\alpha}, \quad |x|\leq1,\alpha > 2.\\
 \end{cases}}\label{ex_phi1}
 \end{align}
 We can also consider the following examples:
\begin{align}
 &\varphi (x)= \exp\{a|x|^{\alpha}\}-1, \alpha\leq2,\,\,a>0\nonumber\\
 &\varphi(x)=
 {\begin{cases}
\exp\{a|x|^{\alpha}\}- 1,\quad 1 <|x| = 1,\alpha >2, a>0,\\
 \exp\{a|x|^{2}\} - 1, \quad |x|<1,a>0.
\end{cases}}\label{ex_phi2}
\end{align}
\end{ex}

\begin{defi}\cite{GKN,KO}
  Let $\varphi$ be an $N$-function satisfying condition $Q$ and $\{\Omega, L,\P\}$ be a standard probability space.
  The random variable $\zeta$ belongs to the space $\Sub_\varphi(\Omega)$,
  if $\Expect \zeta=0,$ $ \Expect \exp\{\lambda\zeta\}$ exists for all $\lambda\in\R$ and there exists a constant
  $a>0$ such that the following inequality holds for all $\lambda\in\R$
$$\Expect \exp\{\lambda \zeta\} \leq \exp \{\varphi(\lambda a)\}.$$
\end{defi}

The space $ \Sub_\varphi(\Omega)$ is a Banach space with respect to
the norm \cite{GKN,KO}
$$\tau_\varphi (\zeta)= \sup_{\lambda \neq 0}\frac{\varphi^{(-1)}\left(\ln \Expect \exp(\lambda\zeta)\right)}{\left|\lambda\right|},$$
which can be written equivalently as
$$ \tau_\varphi (\zeta) = \inf\{a > 0: E \exp \{\lambda \zeta \} \leq
\exp\{ \varphi (a \lambda)\},$$
and  it is called the $\varphi$-sub-Gaussian standard of the random variable $\zeta$.

Examples of $\varphi$-sub-Gaussian random variables can be found
in the paper \cite{KR}, the books \cite{L} and
\cite{VKY2008}.

Centered Gaussian random variables  $\zeta= N(0,\sigma^2)$ are $\varphi$-sub-Gaussian with $\varphi (x)=\frac{x^2}{2}$
and $\tau^2_\varphi(\zeta)=\Expect \zeta^2=\sigma^2$. In the case, when  $\varphi(x)=\frac{x^2}{2}$,
$\varphi$-sub-Gaussian random variables are called sub-Gaussian.

The  important property of  $\varphi$-sub-Gaussian random variables is the exponential estimate for their tail probabilities, namely, if $\zeta$ is a $\varphi$-sub-Gaussian random variable, then for all $u>0$ we have
\begin{equation}\label{tail}
P\{|\zeta|>u\}\le 2\exp\left\{-\varphi^*\left(\frac{u}{\tau_\varphi(\zeta)}\right)\right\}.
\end{equation}

We will need some further properties of $\varphi$-sub-Gaussian variables.

\begin{defi}
\cite{KK} A family $\Delta$ of random variables ${\zeta} \in
\Sub_\varphi(\Omega)$ is called strictly $\varphi$-sub-Gaussian if
there exists a constant $C_\Delta $ such that for all countable sets $I$ of random
 variables ${\zeta_i}\in \Delta$, $i\in I$,  the following inequality holds:
\begin{equation}\label{*}
\tau_\varphi \left( \sum_{i \in I} \lambda_i \zeta_i \right) \leq
 C_\Delta \left( \Expect \left( \sum_{i \in I}\lambda_i \zeta_i \right)^2\right)^{1/2}.
\end{equation}
\end{defi}

The constant $C_\Delta$ is called the \textit{determining} constant of the family $\Delta$.

\begin{lmm}\cite{KK}
 The linear closure of a strictly $\varphi$-sub-Gaussian family
$\Delta$ in the space $L_2(\Omega)$ is the strictly
$\varphi$-sub-Gaussian with the same determining constant.
\end{lmm}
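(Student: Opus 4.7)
The plan is to verify directly the strictly $\varphi$-sub-Gaussian property \eqref{*} for a countable subfamily of the $L_2$-closure $\overline{\Delta}$ by approximating every member of $\overline{\Delta}$ by finite linear combinations of elements of $\Delta$ and passing to the limit simultaneously in $L_2(\Omega)$ and in the norm $\tau_\varphi$. The essential auxiliary fact is that Condition $Q$ forces an inequality of the form $(\Expect\zeta^2)^{1/2}\le K\,\tau_\varphi(\zeta)$ for every $\zeta\in\Sub_\varphi(\Omega)$ (this follows from expanding $\Expect\exp(\lambda\zeta)\ge 1+\tfrac{\lambda^2}{2}\Expect\zeta^2$ and $\exp\{\varphi(\lambda\tau_\varphi(\zeta))\}\sim 1+c\lambda^2\tau_\varphi^2(\zeta)$ as $\lambda\to 0$). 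In particular, convergence in $\tau_\varphi$ implies convergence in $L_2(\Omega)$, while the completeness of $\Sub_\varphi(\Omega)$ provides convergence in $\tau_\varphi$ of any $\tau_\varphi$-Cauchy sequence.

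First I would take an arbitrary $\xi\in\overline{\Delta}$ and a sequence of finite linear combinations $\eta_n=\sum_k \alpha_k^{(n)}\zeta_k^{(n)}$ with $\zeta_k^{(n)}\in\Delta$ such that $\eta_n\to\xi$ in $L_2(\Omega)$. By the strict $\varphi$-sub-Gaussianity of $\Delta$ applied to the finite family $\{\zeta_k^{(n)},\zeta_k^{(m)}\}$ one obtains
\begin{equation*}
\tau_\varphi(\eta_n-\eta_m)\le C_\Delta\bigl(\Expect(\eta_n-\eta_m)^2\bigr)^{1/2},
\end{equation*}
so $\{\eta_n\}$ is Cauchy in $\tau_\varphi$ and hence converges to some $\tilde\xi\in\Sub_\varphi(\Omega)$; by the $L_2$-domination above, $\tilde\xi=\xi$ a.s., so $\xi\in\Sub_\varphi(\Omega)$ and $\tau_\varphi(\eta_n)\to\tau_\varphi(\xi)$.

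Next I would consider a countable family $\{\xi_i\}_{i\in I}\subset\overline{\Delta}$ together with scalars $\{\lambda_i\}_{i\in I}$. For each $i$ pick an approximating sequence $\eta_i^{(n)}$ as above. Given a finite $J\subset I$, the element $\sum_{i\in J}\lambda_i\eta_i^{(n)}$ lies in the linear span of $\Delta$, so \eqref{*} applied inside $\Delta$ yields
\begin{equation*}
\tau_\varphi\!\Bigl(\sum_{i\in J}\lambda_i\eta_i^{(n)}\Bigr)\le C_\Delta\Bigl(\Expect\Bigl(\sum_{i\in J}\lambda_i\eta_i^{(n)}\Bigr)^{\!2}\Bigr)^{\!1/2}.
\end{equation*}
Letting $n\to\infty$, continuity of $\tau_\varphi$ and of the $L_2$-norm under the simultaneous convergence established in the previous step gives the same inequality with $\eta_i^{(n)}$ replaced by $\xi_i$. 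Finally, exhausting $I$ by an increasing sequence of finite subsets $J_m\uparrow I$ and noting that both sides are monotone in $J$ (the right-hand side by the $L_2$-Cauchy property of the partial sums, the left-hand side by the $\tau_\varphi$-Cauchy property inherited from the domination) allow to pass to the limit and obtain $\tau_\varphi(\sum_{i\in I}\lambda_i\xi_i)\le C_\Delta(\Expect(\sum_{i\in I}\lambda_i\xi_i)^2)^{1/2}$ with the same constant $C_\Delta$.

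The main technical point, and the only place where care is required, is the joint continuity step: I must ensure that the approximations in $L_2$ can be promoted to approximations in $\tau_\varphi$ without introducing an extra multiplicative factor in front of $C_\Delta$. This is exactly what the strict form of \eqref{*} inside $\Delta$ provides, because any Cauchy sequence in $L_2$ formed by linear combinations from $\Delta$ is automatically Cauchy in $\tau_\varphi$ with the \emph{same} constant $C_\Delta$; no further enlargement of the constant occurs when taking limits, so the determining constant of $\overline{\Delta}$ coincides with that of $\Delta$.
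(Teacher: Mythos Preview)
The paper does not give a proof of this lemma: it is quoted from \cite{KK} and used as a black box, so there is no ``paper's own proof'' to compare against. Your argument is essentially the standard one and is correct in outline; the key point --- that an $L_2$-Cauchy sequence of finite linear combinations from $\Delta$ is automatically $\tau_\varphi$-Cauchy with the \emph{same} constant $C_\Delta$, hence no inflation of the determining constant occurs in the limit --- is identified correctly.

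One phrasing should be tightened. In the last step you write that ``both sides are monotone in $J$''. Neither side is monotone in the partial index set: $\tau_\varphi\bigl(\sum_{i\in J}\lambda_i\xi_i\bigr)$ and $\bigl(\Expect(\sum_{i\in J}\lambda_i\xi_i)^2\bigr)^{1/2}$ can oscillate as $J$ grows. What you actually need (and what your parenthetical remark hints at) is convergence, not monotonicity: assuming $\sum_{i\in I}\lambda_i\xi_i$ converges in $L_2(\Omega)$, the partial sums over $J_m\uparrow I$ are $L_2$-Cauchy, hence by the finite-$J$ inequality they are $\tau_\varphi$-Cauchy, and therefore both sides converge; passing to the limit in the inequality for finite $J$ then gives the result. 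Replace the monotonicity claim by this convergence argument and the proof is complete.
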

\begin{defi}
\cite{KK} Random process $\zeta=\{\zeta(t), t\in T\}$ is called
strictly $\varphi$-sub-Gaussian  if the family of random variables
$\{\zeta(t), t\in T\}$ is strictly $\varphi$-sub-Gaussian with a
determining constant~$C_\zeta$.
\end{defi}

\begin{ex} \cite{KK} Let a family of random variables $\{\xi_k, \; k = \overline{1, \infty}\}$
be a strictly $\varphi$-sub-Gaussian with determining constant
$C_\xi$, and let $X(t) = \sum\limits_{k=1}^\infty \xi_k \varphi_k (t)$,
where the series converges in mean square. Then the  random process
$X=\{X(t),t\in T\}$ is strictly $\varphi$-sub-Gaussian with determining
constant $C_\xi.$
\end{ex}

\begin{ex} \cite{KK} Let $\{\xi_k, \; k = \overline{1, \infty}\}$ be a family
 of independent random variables such that $\xi_k \in \Sub_\varphi (\Omega)$
 and $\varphi (x)$ be such function that $\kappa (x) = \varphi (\sqrt{x})$ is concave.
 If $\tau_\varphi (\xi_n ) \leq C ( E \xi_k^2 )^{1/2}$, $ C> 0$, and for
 all $t \in T$ the series
  $\sum\limits_{k=1}^\infty \xi_k^2 \varphi_k^2 (t)$ converges,
  then series $\sum\limits_{k=1}^\infty \xi_k \varphi_k (t)$,  $t \in T,$ is strictly
$\varphi$-sub-Gaussian random process with determining constant $C.$
\end{ex}
\begin{ex}\label{ex_kernel}\cite{KK} Let $K$ be a deterministic kernel and suppose that the process
$X=\left\{X(t), t\in T\right\}$
 can be represented in the form
$$
X(t) =\int\limits_T K(t,s) \,\d\xi(s),
$$
where $\xi(t)$, $t\in T$,  is a strictly $\varphi$-sub-Gaussian random process
and the integral above is defined in the mean-square sense. Then
the process $Xt)$, $t\in T$, is  strictly $\varphi$-sub-Gaussian random
process with the same determining constant.
\end{ex}

\begin{lmm}\label{lmm12}\cite{KR}, \cite{KRP} (p. 146)
 Let $Z(u),u\geq 0$ be a continuous, increasing function such
that $ Z(u)>0$ and the function $\frac{u}{Z(u)}$ is non-decreasing
for $u>u_0$, where $u_0\geq 0$ is a constant. Then for all
$u,v\neq 0$
\begin{equation}\label{**}
\left|\sin\frac{u}{v}\right|\leq \frac{Z\left(\left|u\right|+u_0\right)}{Z\left(\left|v\right|+u_0\right)}
\end{equation}
\end{lmm}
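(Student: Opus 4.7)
The plan is to split on whether $|u| \le |v|$ or $|u| > |v|$ and combine the two elementary bounds $|\sin x| \le |x|$ and $|\sin x| \le 1$ with the monotonicity hypothesis on $u/Z(u)$.

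First I would handle the easy case $|u| > |v|$. Here $|\sin(u/v)| \le 1$, and since $Z$ is increasing and $|u|+u_0 > |v|+u_0$, we get $Z(|u|+u_0) \ge Z(|v|+u_0)$, which immediately yields the claimed inequality \eqref{**}.

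Next I would treat the harder case $|u| \le |v|$. Using $|\sin x| \le |x|$, it suffices to show
\[
\frac{|u|}{|v|} \le \frac{Z(|u|+u_0)}{Z(|v|+u_0)}.
\]
Since $u \neq 0$ and $v \neq 0$, both $|u|+u_0$ and $|v|+u_0$ lie in the range $(u_0,\infty)$ where $u/Z(u)$ is non-decreasing; because $|v|+u_0 \ge |u|+u_0$, this gives
\[
\frac{|u|+u_0}{Z(|u|+u_0)} \le \frac{|v|+u_0}{Z(|v|+u_0)},
\]
which rearranges to $\frac{Z(|u|+u_0)}{Z(|v|+u_0)} \ge \frac{|u|+u_0}{|v|+u_0}$. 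It then remains to observe that $\frac{|u|+u_0}{|v|+u_0} \ge \frac{|u|}{|v|}$, which follows directly from $|v| \ge |u|$ by cross-multiplying: $(|u|+u_0)|v| - |u|(|v|+u_0) = u_0(|v|-|u|) \ge 0$.

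There is no real obstacle here; the whole lemma is a one-line algebraic manipulation once the two cases are separated. The only point that requires a moment of care is making sure the monotonicity of $u/Z(u)$ is applied inside its domain of validity $(u_0,\infty)$, which is guaranteed by the hypothesis $u,v \neq 0$ in the lemma.
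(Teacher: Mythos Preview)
Your argument is correct. The paper itself does not supply a proof of this lemma; it simply cites the references \cite{KR} and \cite{KRP} (p.~146) and moves on to examples. The case split you use --- the bound $|\sin x|\le 1$ when $|u|>|v|$, and the bound $|\sin x|\le |x|$ combined with the monotonicity of $u/Z(u)$ when $|u|\le |v|$ --- is exactly the standard elementary proof that appears in those sources. Your verification that $\dfrac{|u|+u_0}{|v|+u_0}\ge \dfrac{|u|}{|v|}$ via $u_0(|v|-|u|)\ge 0$ is the only nontrivial step, and you handled it cleanly; the remark that $|u|+u_0,\,|v|+u_0>u_0$ (needed to apply the hypothesis on $u/Z(u)$) is also correctly justified by $u,v\neq 0$.
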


\begin{ex}\label{a1}
 If $Z(u)=u^\alpha, 0<\alpha\leq 1$, then
 $u_0=0, \left|\sin\frac{u}{v}\right|\leq \frac{\left|u\right|^\alpha}{\left|v\right|^\alpha}$.
\end{ex}

\begin{ex}\label{a2}
If $Z(u)=\ln^\alpha(u+1),\alpha>0,$ then $u_0=e^\alpha-1$ and
$\left|\sin\frac{u}{v}\right|\leq
\left(\frac{\ln\left(\left|u\right|+e^\alpha\right)}{\ln\left(\left|v\right|
+e^\alpha\right)}\right)^\alpha$.
\end{ex}

\begin{defi}
Function $Z(u), u\geq 0,$ is called \textit{admissible} for the space
$\Sub_\varphi(\Omega)$, if for $Z(u)$ conditions of Lemma~\ref{lmm12}
hold and for some $\varepsilon>0$ the integral
$$\int_0^\varepsilon\Psi\left(\ln\left(Z^{(-1)}\left(\frac{1}{s}
\right)-u_0\right)\right)ds$$ converges,
where $\Psi(v)=\frac{v}{\varphi^{(-1)}(v)}, v>0.$
\end{defi}

It is easy to see that functions $ Z(u) = u^\alpha$, \, $ 0 <
\alpha <1,$ and $Z(u) = \ln^\alpha (u+1)$, \, $\alpha >
\frac{1}{2},$ are admissible for the space $\Sub_\varphi(\Omega)$
if $\varphi (x)$ are defined in (\ref{ex_phi1}) and (\ref{ex_phi2})  respectively.

In order to derive our main results, we will use the estimates of distribution of suprema of $\varphi$-sub-Gaussian
random processes (see \cite{KS04}).

We consider a separable
$\varphi$-sub-Gaussian process defined on a separable metric space $(T,d)$, where  $T = \{ a_i \leq t \leq b_i, \, i=1, 2 \}$
and $ d({t}, {s}) = \max\limits_{i=1,2} |t_i -s_i|,$ ${t} =(t_1, t_2),$
${s} =(s_1, s_2).$

\begin{thm}\label{sup_X}
Assume that $ X=\{X(
{t}), {t} \in T \}$ is a separable
$\varphi$-sub-Gaussian process such that
\begin{equation}\label{suptau3.16}
\sup_{\substack{
        d({t},{s})\leq h,\\
        {t}, {s}\in T \\
}}\tau_{\varphi} (X ({t})-
X ({s}))\leq \sigma(h),
\end{equation}
where $\{\sigma(h),\; 0 < h \leq \max\limits_{i = 1,2} |b_i -a_i|\}$
is a monotonically increasing continuous function such
 that $\sigma(h)\rightarrow 0$ as $ h \rightarrow 0 $ and for some
 $\varepsilon > 0$
\begin{equation}\label{intPsi3.16}
 \int_0^\varepsilon \Psi \Big( \ln \frac{1}{\sigma^{(-1)} (u)} \Big)\,du < \infty,
 \end{equation}
where
$
\Psi (v) = \frac{v}{\varphi^{(-1)} (v)}. $
Then
$$ P \big\{ \sup_{ {t} \in T} |X( {t} )| > u \big\}
\leq   2A(u, \theta) $$
for all $ 0 < \theta <1 $  and
$$ u > \frac{2 I_\varphi (\min( \theta \varepsilon_0, \gamma_0))}{\theta (1-\theta)} ,$$
 where
$$ A(u, \theta) = \exp\Big\{ -\varphi^* \Big( \frac{1}{\ve_0} \Big( u(1-\theta) -
\frac{2}{\theta} I_\varphi (\min (\theta \varepsilon_0, \gamma_0)) \Big) \Big) \Big\},$$
and
$$\varepsilon_0 = \sup_{t \in T} \tau_\varphi (X({t} )), \quad
\gamma_0 = \sigma (\max_{i =1,2} |b_i -a_i| ) ,$$
$\varphi^* (u)$ is the Young-Fenchel transform of the function $\varphi$,
$$ I_\varphi (\delta) = \int_0^\delta \Psi \left( \ln \left[\Big(
\frac{b_1 -a_1}{2 \sigma^{(-1)} (u)} +1 \Big) \Big(
\frac{b_2 -a_2}{2 \sigma^{(-1)} (u)} +1 \Big)\right]\right)\,du.$$
\end{thm}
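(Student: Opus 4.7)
The plan is to apply a chaining argument tailored to the $\varphi$-sub-Gaussian setting, driven by the exponential tail bound \eqref{tail} at each scale. The key geometric input is that $d$ is the max-distance on the rectangle $T=[a_1,b_1]\times[a_2,b_2]$, so a regular lattice of mesh $2r$ produces an $r$-net of cardinality at most $\prod_{i=1}^{2}\bigl((b_i-a_i)/(2r)+1\bigr)$; this is precisely the quantity whose logarithm enters the definition of $I_{\varphi}$.

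First, I would fix a geometric sequence $\varepsilon_k=\gamma_0\,2^{-k}$ and construct $\varepsilon_k$-nets $T_k\subset T$ as above. By separability of $X$, for each $t\in T$ one selects $\pi_k(t)\in T_k$ with $d(t,\pi_k(t))\le\varepsilon_k$, and fixing a base point $t_0\in T_0$ gives the telescoping identity
\[
X(t)=X(t_0)+\sum_{k=0}^{\infty}\bigl(X(\pi_{k+1}(t))-X(\pi_k(t))\bigr).
\]
Since $d(\pi_k(t),\pi_{k+1}(t))\le 2\varepsilon_k$, hypothesis \eqref{suptau3.16} yields $\tau_{\varphi}\bigl(X(\pi_{k+1}(t))-X(\pi_k(t))\bigr)\le\sigma(2\varepsilon_k)$, while the number of distinct increments of this form is bounded by $|T_k|\cdot|T_{k+1}|\le|T_k|^{2}$.

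The second step is a level-by-level union bound. Allocating a budget $u_k>0$ to level $k$ and applying \eqref{tail} gives
\[
P\bigl\{\exists\,t:\,|X(\pi_{k+1}(t))-X(\pi_k(t))|>u_k\bigr\}\le 2|T_k|\,|T_{k+1}|\,\exp\bigl\{-\varphi^{*}(u_k/\sigma(2\varepsilon_k))\bigr\}.
\]
Choosing $u_k$ proportional to $\sigma(2\varepsilon_k)\,\varphi^{(-1)}\bigl(\ln(|T_k||T_{k+1}|)\bigr)$ and summing, a Riemann-type estimate obtained by the change of variable $u=\sigma(2\varepsilon_k)$ (using monotonicity of $\Psi$ and of $\sigma^{(-1)}$) converts $\sum_k u_k$ into a constant multiple of the integral $I_{\varphi}(\delta)$, where $\delta$ is the diameter-scale cut-off. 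Matching constants fixes the factor $2/\theta$ in the statement and dictates $\delta=\min(\theta\varepsilon_0,\gamma_0)$.

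Finally, to obtain $A(u,\theta)$ itself, I would combine the chaining remainder with the endpoint contribution: $X(t_0)$ satisfies $\tau_\varphi(X(t_0))\le\varepsilon_0$, so \eqref{tail} applied to it produces the factor $\exp\{-\varphi^{*}(\,\cdot\,/\varepsilon_0)\}$. Splitting the target level as $u=u(1-\theta)+u\theta$ between the endpoint term and the chaining remainder, absorbing $(2/\theta)I_{\varphi}(\min(\theta\varepsilon_0,\gamma_0))$ into the argument of $\varphi^{*}$, and using the two-sided form of \eqref{tail} to obtain the leading factor $2$, yields exactly $2A(u,\theta)$. I expect the main obstacle to be the bookkeeping that turns the discrete chaining sum into the precise integral $I_{\varphi}$, particularly the change of variable to $r=\sigma^{(-1)}(u)$ that replaces $\ln(|T_k|\cdot|T_{k+1}|)$ by $\ln\bigl[((b_1-a_1)/(2\sigma^{(-1)}(u))+1)((b_2-a_2)/(2\sigma^{(-1)}(u))+1)\bigr]$; convergence of the resulting integral near $0$ is guaranteed exactly by hypothesis \eqref{intPsi3.16}, which is the counterpart of the Dudley condition in the Orlicz setting.
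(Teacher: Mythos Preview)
Your chaining argument is sound in outline and would, with careful bookkeeping, lead to the stated bound; it is in fact the mechanism underlying the general entropy theorem that the paper invokes. The paper, however, does not reprove chaining: it simply cites Theorem~4.2 from \cite{BK}, which gives the supremum bound directly in terms of the metric entropy $H(\varepsilon)$ of $(T,\rho_X)$, and then reduces the proof to the single observation that, by \eqref{suptau3.16}, an $\varepsilon$-ball in $\rho_X$ contains a $\sigma^{(-1)}(\varepsilon)$-ball in $d$, so that for $\varepsilon<\gamma_0$
\[
H(\varepsilon)\le \ln\Big[\Big(\frac{b_1-a_1}{2\sigma^{(-1)}(\varepsilon)}+1\Big)\Big(\frac{b_2-a_2}{2\sigma^{(-1)}(\varepsilon)}+1\Big)\Big],
\]
while $H(\varepsilon)=0$ for $\varepsilon\ge\gamma_0$; plugging this into the entropy integral $\tilde I_\varphi(\theta\varepsilon_0)=\int_0^{\theta\varepsilon_0}\Psi(H(\varepsilon))\,d\varepsilon$ immediately gives $I_\varphi(\min(\theta\varepsilon_0,\gamma_0))$. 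What you gain by redoing the chaining is a self-contained proof; what the paper gains is that all the delicate issues you flag (the choice of budgets $u_k$, the conversion of the discrete sum to the integral, the precise origin of the factors $2/\theta$ and $1-\theta$) are absorbed into the cited result, leaving only a one-line entropy computation.
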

\begin{proof}
The statement of Theorem~\ref{sup_X} follows from the following general theorem proved in \cite{BK}:

{\it Theorem 4.2 (\cite{BK}, p.105).
Suppose that $X = \{ X(t), \, t \in T\}$ is a $\varphi$-sub-Gaussian process.
Let $\rho_X$ be the pseudometric  generated by $X$, that is,
$ \rho_X (t, s) = \tau_\varphi (X(t) - X(s)),$ $ t, s \in T,
$  and $\varepsilon_0 =\sup \tau_\varphi (X(t)) < \infty. $
Assume that $(T, \rho_X)$ is a separable space, the process $X$ is separable
on $(T, \rho_X)$ and
$$\int\limits_0^{\varepsilon_0}
\Psi (H(\varepsilon))\,d\varepsilon < \infty,$$ with $H(\varepsilon)$
being the metric entropy of the set $T$, that is, $H(\varepsilon) = \ln (N(\varepsilon)),$
where $N(\varepsilon)$ denotes the smallest number of  elements in an
$\varepsilon$-covering of the set $(T, \rho_X).$

Then
 $$ P \{\sup\limits_{t\in T} |X(t)| \geq u\} \leq 2A(u, \theta), $$
 for each $\theta \in (0, 1)$
and
$$
 u >\frac{2 \tilde{I}_\varphi ( \theta \varepsilon_0)}{\theta (1-\theta)},
$$
  where
 $$A(u, \theta) = \exp \Big\{- \varphi^* \Big( \frac{1}{\varepsilon_0}
\Big(u (1-\theta ) - \frac{2}{\theta} \tilde{I}_\varphi (\theta \varepsilon_0)
\Big) \Big) \Big\},\quad
 \tilde{I}_\varphi (\theta \varepsilon_0) = \int_0^{\theta \varepsilon_0}
  \Psi (H(\varepsilon))\,d\varepsilon.$$
}

Now one can see that the assertion of Theorem~\ref{sup_X} follows from the fact that
the process $X(t)$ is separable on $(T, \rho_X)$ and if $\sigma (u) < \gamma_0$
(that is, if $\varepsilon < \gamma_0)$ then
$$
H(\varepsilon) \leq \ln\Big[ \Big( \frac{b_1 -a_1}{2 \sigma^{(-1)} (\varepsilon)}
+ 1\Big) \Big( \frac{b_2 -a_2}{2 \sigma^{(-1)} (\varepsilon)}+ 1\Big) \Big]
\quad \mbox{and} \quad H(\varepsilon) = 0 \quad \mbox{if}
\quad \varepsilon \geq \gamma_0.
$$
\end{proof}

\section{Solutions of linear odd-order heat-type equations with random initial conditions}

The next theorem, which is a modification of Theorem~5.1 from the paper
\cite{BKOS},  gives the conditions of the existence of solution of odd-order heat-type equation with $\varphi$-sub-Gaussian initial condition.
\begin{thm}\label{thm_solution}
Let us consider the linear equation
\begin{equation}\label{linear_eqn}
\sum_{k=1}^N a_k\frac{\partial^{2k+1} U(t,x)}{\partial x^{2k+1}}=
\frac{\partial U(t,x)}{\partial t} ,\, t>0,\, x \in \R,\,\,
\end{equation}
subject to the random initial condition
\begin{equation}\label{random_ini_cond}
U(0,x)=\eta(x),\, x \in \R,
\end{equation}
and $\{a_k\}_{k=1}^N$ are some constants.

Let $\eta=\{\eta(x),x\in\R\}$ be a real harmonizable (as defined in definition~\ref{real_harmonizable})
and strictly $\varphi$-sub-Gaussian random process. Also
let $Z=\{Z(u),u\geq 0\}$ be a function admissible for the space
 $\Sub_\varphi(\Omega)$. Assume that the following integral converges
\begin{equation}\label{exists_classic}
\int_{\R}\int_{\R}\left|\lambda\right|^{2N+1}
\left|\mu\right|^{2N+1}Z\left(u_0+\left|\lambda\right|^{2N+1}
\right)Z\left(u_0+\left|\mu\right|^{2N+1}\right)d|\Gamma_y(\lambda,\mu)|<\infty.
\end{equation}
Then
\begin{equation}\label{solution}
 U(t,x)=\int_{-\infty}^{\infty}I(t,x,\lambda)\, dy(\lambda)
 \end{equation}
is the classical solution to the problem (\ref{linear_eqn})-(\ref{random_ini_cond}), where
\begin{equation}\label{Itxlm}
I(t,x,\lambda)=\kappa \left(\lambda x +t\sum_{k=1}^{N} a_k \lambda^{2k+1}(-1)^k \right),
\end{equation}
 and $\kappa (v) = \cos v$ or $\kappa (v) = \sin v$ for the cases when $\eta(x) = \int\limits_R cos (tu)\,dy(u)$  or $\eta(x) = \int\limits_R \sin(tu) \,dy(u)$ correspondingly.
 \end{thm}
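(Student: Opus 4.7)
The plan is to show that (i) $U(t,x)$ is well-defined in mean-square; (ii) $U(t,x)$ satisfies \eqref{linear_eqn} in the classical pointwise sense almost surely; and (iii) $U(0,x)=\eta(x)$. Existence is straightforward: since $|I(t,x,\lambda)|\le 1$ and $\Gamma_y$ has finite total variation by harmonizability, one has $\E|U(t,x)|^2\le\int_\R\int_\R d|\Gamma_y(\lambda,\mu)|<\infty$. The initial condition is immediate from the definition, $U(0,x)=\int_\R\kappa(\lambda x)\,dy(\lambda)=\eta(x)$, as $t=0$ collapses $I(0,x,\lambda)$ to $\kappa(\lambda x)$.

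Next I would verify the PDE at the level of integrands. Writing $\phi(t,x,\lambda):=\lambda x+t\sum_{k=1}^{N}a_k(-1)^k\lambda^{2k+1}$, the linearity of $\phi$ in $x$ gives $\partial_x^m\kappa(\phi)=\kappa^{(m)}(\phi)\,\lambda^m$; for $\kappa=\cos$ one has $\kappa^{(2k+1)}=(-1)^{k+1}\sin$, and a direct computation shows $\sum_{k=1}^{N}a_k\partial_x^{2k+1}I(t,x,\lambda)=-\sin(\phi)\sum_{k=1}^{N}a_k(-1)^k\lambda^{2k+1}=\partial_t I(t,x,\lambda)$ pointwise in $\lambda$ (the case $\kappa=\sin$ is identical). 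I would then justify interchanging these partial derivatives with the stochastic integral by showing that each mean-square difference quotient of $U$ converges to $V_m(t,x):=\int_\R\partial_x^m I(t,x,\lambda)\,dy(\lambda)$ and analogously for $\partial_t U$. Since $|\partial_x^m I|\le|\lambda|^m$, existence of $V_m$ in mean-square follows from $\int_\R\int_\R|\lambda|^m|\mu|^m\,d|\Gamma_y|<\infty$, which is implied by \eqref{exists_classic}. By linearity of the mean-square integral, the identity on integrands then yields $\sum_{k=1}^{N}a_k\partial_x^{2k+1}U(t,x)=\partial_t U(t,x)$ in mean-square, hence almost surely at each fixed $(t,x)$.

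The main obstacle is upgrading mean-square identities to a classical (sample-path differentiable) solution. Here the strictly $\varphi$-sub-Gaussian hypothesis is essential: by Example~\ref{ex_kernel}, every derivative process $V_m$ and $\partial_t U$ is again strictly $\varphi$-sub-Gaussian with determining constant $C_\eta$, so $\tau_\varphi^2\bigl(V_m(t,x)-V_m(t',x')\bigr)\le C_\eta^2\,\E\bigl|V_m(t,x)-V_m(t',x')\bigr|^2$. Estimating this $L^2$-increment via $|\sin a-\sin b|\le 2|\sin((a-b)/2)|$ and applying Lemma~\ref{lmm12} with the admissible function $Z$ inserts precisely the factors $Z(u_0+|\lambda|^{2N+1})Z(u_0+|\mu|^{2N+1})$ that appear in \eqref{exists_classic}; the admissibility of $Z$ for $\Sub_\varphi(\Omega)$ then supplies, via a metric-entropy argument of the type underlying Theorem~\ref{sup_X}, sample-path continuity of all derivatives up to order $2N+1$ in $x$ and order one in $t$. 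Separability then delivers the PDE almost surely simultaneously in $(t,x)$, completing the proof. The role of the $Z$-factors is exactly to provide the extra regularity on the spectral side needed to control the modulus of continuity of the highest-order derivative processes.
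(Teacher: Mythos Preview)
The paper does not give a self-contained proof of this theorem; it presents the statement as a modification of Theorem~5.1 in \cite{BKOS} and refers the reader there for details, noting only (in the subsequent remark) that condition \eqref{exists_classic} guarantees the uniform convergence in probability, over compact $(t,x)$-rectangles, of all the integrals $\int_\R \lambda^{s} I(t,x,\lambda)\,dy(\lambda)$ for $s=0,1,\ldots,2N+1$.

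Your outline is consistent with that strategy and is correct in substance: the integrand identity $\sum_{k=1}^{N} a_k\,\partial_x^{2k+1} I=\partial_t I$ is computed correctly, and the passage from mean-square differentiation to a classical solution is indeed secured by the strictly $\varphi$-sub-Gaussian structure together with admissibility of $Z$. The only difference is one of packaging. The paper (via \cite{BKOS}) phrases the key step as \emph{uniform convergence in probability} of the truncated derivative integrals, which directly legitimises termwise differentiation; you phrase it as \emph{sample-path continuity} of the derivative fields $V_m$ obtained from an entropy/supremum bound of the type in Theorem~\ref{sup_X}. Both routes rest on the same increment estimate---Lemma~\ref{lmm12} applied to $|\kappa(A)-\kappa(B)|$, producing exactly the $Z(u_0+|\lambda|^{2N+1})Z(u_0+|\mu|^{2N+1})$ factors in \eqref{exists_classic}---and both require admissibility of $Z$ to close the argument. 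Your version has the minor advantage of making explicit why the particular form of \eqref{exists_classic} arises; the paper's version is shorter because it outsources the analytic work to \cite{BKOS}.
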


\begin{rem}
Note that  under the condition \eqref{exists_classic}  all the integrals
\begin{equation}\label{integr}
\int_{R}\lambda ^{s}I\left( t,x,\lambda \right) dy\left( \lambda \right) ,\
s=0,1,2,\ldots ,2N+1,
\end{equation}
converge uniformly in  probability for $|x| \leq A$, $0 \leq t \leq T$ for all $A, T$ (we refer for more details to \cite{BKOS}).
\end{rem}

\begin{rem}
Let $\varphi \left( x\right) =\frac{\left| x\right|
^{p}}{p},$ $p>1$ for sufficiently large $x.$ Then the statement of Theorem
\ref{thm_solution} holds if the following integral converges
\begin{equation}\label{phixp}
\int_{R}\int_{R}\left| \lambda \mu \right| ^{2N+1}\left( \ln \left(
1+\lambda \right) \ln \left( 1+\mu \right) \right) ^{\alpha }d|\Gamma
_{y}\left( \lambda ,\mu \right)| ,
\end{equation}%
where $\alpha $ is a constant such that $\alpha >1-\frac{1}{p}$
(see  \cite{BKOS}).
\end{rem}


\medskip

Generalized solution for the equation \eqref{linear_eqn} with the random initial condition (\ref{random_ini_cond}) of the form
$\eta(x) = \int\limits_R cos (tu)\,dy(u)$  or  $\eta(x) = \int\limits_R \sin(tu) \,dy(u)$
is given by process
\begin{equation}\label{utx}
U(t,x) = \int_R I(t,x,\lambda)\, dy(u),
\end{equation}
where $I(t,x,\lambda)$ is given by \eqref{Itxlm}  (with $\kappa (x) = \cos x$ or
 $\kappa (x) = \sin x$), provided that the integral \eqref{utx} converges uniformly in
 probability for $|x| \leq A$, $0 \leq t \leq T$ for all $A, T$, and we do not require the uniform convergence of all the integrals \eqref{integr} as for the case of classical solution.

The next theorem presents the conditions of existence of the generalized solution and follows from the paper \cite{BKOS} (see Section 7 therein).

 \begin{thm}
 Let us consider the linear equation
 $$\sum_{k=1}^N a_k\frac{\partial^{2k+1} U(t,x)}{\partial x^{2k+1}}=
\frac{\partial U(t,x)}{\partial t} ,\, t>0,\, x \in \R,
 $$
subject to the random initial condition (\ref{random_ini_cond}).

 \noindent Let $Z$  be a function admissible for the space $\Sub_\varphi (\Omega),$
and let the following integral converge
\begin{equation}\label{integr-gen}
 \int_R \int_R Z \big( u_0 + |\lambda|^{2N+1}\big)
  Z \big( u_0 + |\mu|^{2N+1}\big)\, d(|\Gamma_y (\lambda, \mu|)).
  \end{equation}
  Then $U(t, x)$ defined in (\ref{solution}) is the generalized solution to the problem (\ref{linear_eqn})-(\ref{random_ini_cond}).
 \end{thm}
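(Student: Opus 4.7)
The plan is to show that the truncated integrals $U_M(t,x) = \int_{-M}^M I(t,x,\lambda)\,dy(\lambda)$ converge to $U(t,x)$ uniformly in probability on every rectangle $K_{A,T} = \{(t,x):\,0\leq t\leq T,\,|x|\leq A\}$; equivalently, that the tail processes $R_{M,M'}(t,x) = \int_{M<|\lambda|\leq M'} I(t,x,\lambda)\,dy(\lambda)$ satisfy $\sup_{(t,x)\in K_{A,T}}|R_{M,M'}(t,x)|\to 0$ in probability as $M\to\infty$, uniformly in $M'>M$. Since $\eta$ is strictly $\varphi$-sub-Gaussian and $I(t,x,\lambda)\mathbf{1}_{\{M<|\lambda|\leq M'\}}(\lambda)$ is a deterministic kernel, Example~\ref{ex_kernel} yields that $R_{M,M'}$ is strictly $\varphi$-sub-Gaussian with the same determining constant $C_\eta$. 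Hence by inequality~(\ref{*}),
\[
\tau_\varphi(R_{M,M'}(t,x))\leq C_\eta\bigl(\Expect R_{M,M'}(t,x)^2\bigr)^{1/2},
\]
and analogously for spatial-temporal increments, with the variances computed as Lebesgue--Stieltjes integrals against $\Gamma_y$. Using $|I|\leq 1$ and the trivial bound $Z(u_0+|\lambda|^{2N+1})\geq Z(u_0)>0$, the pointwise variance is dominated by $\iint_{\{|\lambda|,|\mu|>M\}} d|\Gamma_y(\lambda,\mu)|$, which tends to $0$ as $M\to\infty$ thanks to (\ref{integr-gen}).

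The decisive step is to control the $\varphi$-sub-Gaussian norm of the increments of $R_{M,M'}$ in $(t,x)$. Writing $\phi(t,x,\lambda) = \lambda x + t\sum_{k=1}^N a_k\lambda^{2k+1}(-1)^k$ and using the elementary identity $|\kappa(a)-\kappa(b)|\leq 2|\sin((a-b)/2)|$ for $\kappa\in\{\sin,\cos\}$, the increment $|I(t,x,\lambda)-I(t',x',\lambda)|$ reduces to $2|\sin((\phi(t,x,\lambda)-\phi(t',x',\lambda))/2)|$. I would then invoke Lemma~\ref{lmm12} with the admissible $Z$ and a carefully selected pair $(u,v)$ satisfying $u/v = (\phi(t,x,\lambda)-\phi(t',x',\lambda))/2$ in order to extract a bound of the form
\[
|I(t,x,\lambda)-I(t',x',\lambda)|\leq r\bigl(d((t,x),(t',x'))\bigr)\cdot Z\bigl(u_0+|\lambda|^{2N+1}\bigr),
\]
where $r(h)\to 0$ as $h\to 0$. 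The factor $Z(u_0+|\lambda|^{2N+1})$ ensures that, after squaring and integrating against $d|\Gamma_y|$, the resulting bound is finite by virtue of (\ref{integr-gen}); the factor $r$ supplies the required modulus of continuity in $(t,x)$ that is uniform in $M$ and $M'$.

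With the two bounds in place, the process $R_{M,M'}$ satisfies the hypotheses of Theorem~\ref{sup_X} on $K_{A,T}$: the majorant $\sigma(h)$ of its $\tau_\varphi$-increments tends to $0$ as $h\to 0$, and the admissibility of $Z$ guarantees the finiteness of the entropy integral (\ref{intPsi3.16}). As $M\to\infty$, both $\varepsilon_0(R_{M,M'})=\sup_{(t,x)\in K_{A,T}}\tau_\varphi(R_{M,M'}(t,x))$ and $\gamma_0(R_{M,M'})$ shrink to $0$, so the tail estimate $A(u,\theta)$ from Theorem~\ref{sup_X} tends to $0$ for any fixed $u>0$. This yields $P\{\sup_{(t,x)\in K_{A,T}}|R_{M,M'}(t,x)|>u\}\to 0$, establishing the uniform convergence in probability of $U_M$ to $U$ on $K_{A,T}$ and hence the asserted generalized solution property. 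The main obstacle is the fine choice of $(u,v)$ in Lemma~\ref{lmm12}: one must simultaneously couple decay in $d((t,x),(t',x'))$ with integrability against $d|\Gamma_y|$, balancing the admissibility of $Z$ against the polynomial growth (of degree $2N+1$) of $\phi$ in $\lambda$.
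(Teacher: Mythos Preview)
The paper does not supply its own proof of this theorem: it simply states that the result ``follows from the paper \cite{BKOS} (see Section~7 therein).'' Your proposal is therefore not competing with an argument written out in the present paper but rather reconstructing what the cited proof must look like, and your reconstruction is sound and in the spirit of the methods the paper uses elsewhere (notably the derivation of \eqref{(5.9)} in the proof of Theorem~\ref{thm_sup_solution}).

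Two small corrections. First, the integral defining $R_{M,M'}$ is against $dy(\lambda)$, so the relevant strict $\varphi$-sub-Gaussianity and determining constant are those of the process $y$, not of $\eta$; the paper's later results (Theorems~\ref{thm_sup_solution} and~\ref{thm_increase_rate_U}) impose this hypothesis on $y$ explicitly, and you should do the same. Second, the increment bound that actually emerges from Lemma~\ref{lmm12} is not a single product $r(h)\,Z(u_0+|\lambda|^{2N+1})$ but a sum of two terms, one carrying $Z(|\lambda|/2+u_0)$ and the other $Z\bigl(\tfrac12\bigl|\sum_{k=1}^N a_k\lambda^{2k+1}(-1)^k\bigr|+u_0\bigr)$, each divided by $Z(1/h+u_0)$---exactly as in \eqref{(5.9)}. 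Both of these $Z$-factors are dominated by a fixed constant times $Z(u_0+|\lambda|^{2N+1})$ (use the monotonicity of $Z$ together with the non-decrease of $u/Z(u)$ for $u>u_0$ to handle the multiplicative constant in front of $|\lambda|^{2N+1}$), so your stated bound is recoverable; but this step should be written out explicitly rather than deferred to ``a carefully selected pair $(u,v)$.'' Once these details are supplied, the application of Theorem~\ref{sup_X} with a \emph{fixed} $\sigma(h)$ (independent of $M$) and $\varepsilon_0(M)\to 0$ goes through as you describe, since $I_\varphi(\min(\theta\varepsilon_0(M),\gamma_0))\to 0$ by the admissibility of $Z$.
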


 \begin{rem}
 If the random initial condition $\eta(x)$ is a strictly $\varphi$-sub-Gaussian stationary process with the spectral function $F$, then condition \eqref{integr-gen} becomes
 \begin{equation*}
\int_{R}Z^{2}\left( u_{0}+\left\vert \lambda \right\vert
^{2N+1}\right) dF\left( \lambda \right) <\infty,
\end{equation*}
and conditions \eqref{exists_classic} and \eqref{phixp} can be modified in similar manner.
 \end{rem}

\section{On the distribution of supremum of solution \\ of the problem (\ref{linear_eqn})-(\ref{random_ini_cond})}

We now state the exponential bounds for the distribution of supremum of the field $U(t,x)$ representing the solution to (\ref{linear_eqn})-(\ref{random_ini_cond}).

\begin{thm}\label{thm_sup_solution} Let $y=\{y(u),\, u \in R\}$ be a strictly $\varphi$-sub-Gaussian random
process with a determining constant $C_y$ and $ U(t, x) =
\int\limits_{-\infty}^\infty I(t, x, \lambda)\,dy(\lambda)$, where
$I(t, x, \lambda)$ is given in Theorem~\ref{thm_solution}, $a \leq
t \leq b$, $c \leq x \leq d.$ Assume that $U(t, x)$ exists and is
continuous with probability one (this condition holds if
Theorem~\ref{thm_solution} holds). Let $\E y(t) y(s) = \Gamma_y
(s,t).$ Assume that $Z(u)$ is an admissible function for the space
$\Sub_\varphi (\Omega).$ If the integral
 \begin{align}
C_Z^2  = \int_{-\infty}^\infty \int_{-\infty}^\infty
&\left(Z \Big(\frac{|\lambda|}{2}+u_0\Big) + Z \Big( \frac{1}{2}
\Big| \sum_{k=1}^N a_k \lambda^{2k+1} (-1)^k\Big| + u_0 \Big)\right) \nonumber \\
\times &\left(Z \Big(\frac{|\mu|}{2}+u_0\Big) + Z \Big( \frac{1}{2}
\,\Big| \sum_{k=1}^N a_k \mu^{2k+1} (-1)^k\Big| + u_0 \Big)\right)\,d|\Gamma(\lambda, \mu)|
\end{align}
converges, then for $0<\theta <1$ and
$$ u > \frac{2 \hat{I}_\varphi (\min (\theta \Gamma, \gamma_0))}{\theta(1 - \theta)},$$
   the following inequality holds true
\begin{equation}\label{P_sup_U}
P \big\{ \sup_{\substack{a \le t \le b \\ c \le x \le d}}
|U(t, x)| > u \big\} \leq \exp \Big\{ - \varphi^* \Big(
\frac{1}{\Gamma} \Big( u(1-\theta) - \frac{2}{\theta}
\hat{I}_\varphi ( \min (\theta \Gamma, \gamma_0))\Big) \Big)
\Big\},
\end{equation}
where
\begin{equation}
\Gamma = C_y \int_{-\infty}^\infty \int_{-\infty}^\infty
\,d|F(u,v)|,
\end{equation}
\begin{align}\label{I_varphi}
\hat{I}_\varphi (\delta) & = \int_0^\delta
\Psi \left( \ln\left[ \Big( \frac{b-a}{2} \Big( Z^{(-1)}
\Big( \frac{2 C_Z C_y}{s} \Big) - u_0 \Big) +1 \Big) \right.\right.\nonumber \\
& \left.\left.\times \Big( \frac{c-d}{2} \Big( Z^{(-1)} \Big( \frac{2 C_Z
C_y}{s} \Big) -u_0 \Big) +1\Big)\right] \right) \,ds,
\end{align}
$$ \Psi (u) = \frac{u}{\varphi^{(-1)} (u)},  \quad
 \gamma_0 = \frac{2 C_y C_Z}{Z( \frac{1}{\varkappa} +u_0)}, \quad \varkappa = \max (b-a, d-c).$$
 \end{thm}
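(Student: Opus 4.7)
The strategy is to cast $U(t,x)$ as a separable strictly $\varphi$-sub-Gaussian random field on the rectangle $[a,b]\times[c,d]$ (with $d((t,x),(t',x'))=\max(|t-t'|,|x-x'|)$) and then invoke Theorem~\ref{sup_X}. By Example~\ref{ex_kernel}, since $U(t,x)=\int_\R I(t,x,\lambda)\,dy(\lambda)$ is a mean-square kernel transform of the strictly $\varphi$-sub-Gaussian process $y$, the family $\{U(t,x)\}$ is itself strictly $\varphi$-sub-Gaussian with the same determining constant $C_y$. Consequently, for any two points,
$$\tau_\varphi(U(t,x)-U(t',x'))\le C_y\bigl[\Expect(U(t,x)-U(t',x'))^2\bigr]^{1/2}.$$

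The core analytic step is to estimate the right-hand side in a form $\sigma(h)$ with $h=d((t,x),(t',x'))$. Writing $\Phi(\lambda)=\sum_{k=1}^{N}a_k\lambda^{2k+1}(-1)^k$ and using $|\cos\alpha-\cos\beta|\le 2|\sin((\alpha-\beta)/2)|$ (likewise for $\sin$), together with $|\sin(\alpha+\beta)|\le|\sin\alpha|+|\sin\beta|$, one has
$$\bigl|I(t,x,\lambda)-I(t',x',\lambda)\bigr|\le 2\Bigl|\sin\tfrac{\lambda(x-x')}{2}\Bigr|+2\Bigl|\sin\tfrac{(t-t')\Phi(\lambda)}{2}\Bigr|.$$
Applying Lemma~\ref{lmm12} with the admissible function $Z$ to each sine, and using monotonicity of $Z$ together with $\min(|x-x'|^{-1},|t-t'|^{-1})\ge h^{-1}$, yields
$$\bigl|I(t,x,\lambda)-I(t',x',\lambda)\bigr|\le \frac{2}{Z(h^{-1}+u_0)}\Bigl[Z\bigl(\tfrac{|\lambda|}{2}+u_0\bigr)+Z\bigl(\tfrac{|\Phi(\lambda)|}{2}+u_0\bigr)\Bigr].$$
Substituting this into $\Expect(U(t,x)-U(t',x'))^2=\int\!\!\int \Delta I(\lambda)\Delta I(\mu)\,d\Gamma_y(\lambda,\mu)\le \int\!\!\int|\Delta I(\lambda)||\Delta I(\mu)|\,d|\Gamma_y(\lambda,\mu)|$ and recognizing the resulting double integral as $C_Z^2$ gives the key increment bound
$$\tau_\varphi(U(t,x)-U(t',x'))\le \sigma(h):=\frac{2C_yC_Z}{Z(h^{-1}+u_0)}.$$

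It remains to translate this into the hypotheses of Theorem~\ref{sup_X}. The function $\sigma$ is continuous and increasing with $\sigma(h)\to 0$ as $h\to 0$, and inverting $u=2C_yC_Z/Z(h^{-1}+u_0)$ gives $1/\sigma^{(-1)}(u)=Z^{(-1)}(2C_yC_Z/u)-u_0$. Plugging this identity into the entropy integral of Theorem~\ref{sup_X} with $b_1-a_1=b-a$ and $b_2-a_2=d-c$ reproduces the expression \eqref{I_varphi} for $\hat I_\varphi$, while $\sigma(\varkappa)$ yields the stated $\gamma_0$. An analogous computation without the increment (using $|I(t,x,\lambda)|\le 1$) bounds $\varepsilon_0=\sup\tau_\varphi(U(t,x))$ by $\Gamma$. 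The claimed inequality \eqref{P_sup_U} then follows by direct application of Theorem~\ref{sup_X}.

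\textbf{Main obstacle.} The delicate point is the increment estimate: one must simultaneously (i) split the argument of $\sin$ of the difference of phases into a spatial and a temporal piece so that Lemma~\ref{lmm12} can be applied separately to each, and (ii) arrange the factorization so that the integrand in the double integral decouples into the symmetric sum $Z(|\lambda|/2+u_0)+Z(|\Phi(\lambda)|/2+u_0)$ that defines $C_Z$. The convergence of $C_Z^2$ and the admissibility of $Z$ (providing the finiteness of $\hat I_\varphi$) are exactly the ingredients needed to close the argument; everything else is a direct specialization of Theorem~\ref{sup_X}.
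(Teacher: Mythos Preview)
Your proposal is correct and follows essentially the same route as the paper's own proof: both reduce to Theorem~\ref{sup_X} by using the strict $\varphi$-sub-Gaussianity of $U$ to pass from $\tau_\varphi$ to second moments, then bound $|I(t,x,\lambda)-I(t',x',\lambda)|$ via $|\cos A-\cos B|=2|\sin\frac{A+B}{2}||\sin\frac{A-B}{2}|\le 2|\sin\frac{A-B}{2}|$, split the resulting sine with the addition formula, apply Lemma~\ref{lmm12} to each piece, and read off $\sigma(h)=2C_yC_Z/Z(h^{-1}+u_0)$, its inverse, and $\gamma_0$. The bound $\varepsilon_0\le\Gamma$ from $|I|\le 1$ is likewise identical.
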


 \begin{proof} The assertion of this theorem follows from Theorem~\ref{sup_X}.

 Indeed,
 the process $ U(t, x)$  is separable since  $ U(t, s)$ is continuous with
 probability one. $U(t, x)$ is strictly $\varphi$-sub-Gaussian with
 the determining constant $C_y$, and, therefore, we can write:
 $$
  \sup_{ \substack {|t- t_1| \leq h \\ |x-x_1| \leq h \\}}
 \tau_\varphi ( U(t, x) - U(t_1 x_1))  \leq
C_y \left(E (U(t, x) - U (t_1, x_1))^2 \right)^{1/2}.
$$
We also have:
 \begin{align}
\varepsilon_0 & \leq C_y \sup \E |U(t, x|^2 \le  C_y
\int_{-\infty}^\infty \int_{-\infty}^\infty \big| I(t, x, \lambda
)
I(t, x, \mu) \big| \, d|\Gamma_y (\lambda, \mu)|  \nonumber \\
& \leq C_y \int_{-\infty}^\infty \int_{-\infty}^\infty \, d|\Gamma_y (\lambda, \mu)|
= C_y \, \Gamma.
\end{align}
Let us estimate now $\E (U(t,x) -U(t_1, x_1))^2$ for $\kappa (u) = \cos
(u).$
\begin{align}
\E (U(t, x) & - U(t_1, x_1))^2  = \left( \int_{-\infty}^\infty
(I(t, x, \lambda) -
I(t_1, x_1 , \lambda))\, dy(\lambda)\right)^2   \nonumber  \\
& = \int_{-\infty}^\infty \int_{-\infty}^\infty (I(t, x, \lambda) -
I(t_1, x_1 , \lambda))
 (I(t, x, \mu) - I(t_1, x_1 , \mu)) \, d \Gamma_y (\lambda, \mu) \nonumber \\
 & \leq \int_{-\infty}^\infty \int_{-\infty}^\infty |I(t, x, \lambda) -
I(t_1, x_1 , \lambda)|
 |I(t, x, \mu) - I(t_1, x_1 , \mu)| \, d |\Gamma_y (\lambda, \mu)|;
\end{align}
$|I(t, x, \lambda) - I(t_1, x_1 , \lambda)|
= |\cos A - \cos B|,$ where
$$ A = x\lambda + t \sum_{k=1}^N a_k \lambda^{2k+1} (-1)^k, \quad
B =  x_1\lambda + t_1 \sum_{k=1}^N a_k \lambda^{2k+1} (-1)^k. $$
Thus
$$ |I(t, x, \lambda) - I(t_1, x_1, \lambda) = 2 \Big| \sin\frac{A+ B}{2}
\sin \frac{B- A}{2} \Big| \leq 2 \Big| \sin \frac{B-A}{2}\Big|
 = 2 \Big|\sin (C+D)\Big|, $$
where
$$ C = \frac{\lambda (x_1 - x)}{2}, \quad D = \frac{t_1 - t}{2}
\sum_{k=1}^N a_k \lambda^{2k+1} (-1)^k. $$
Therefore,
\begin{align*}
2 | \sin(C+D)| & = 2| \sin C \cos D + \cos C \sin D|
\leq 2( |\sin C| + |\sin D|)  \\
& \leq 2 \Big( \Big| \sin \frac{\lambda (x_1 -x)}{2} \Big| + \Big|
\sin \frac{(t_1 - t)}{2} \sum_{k=1}^N a_k \lambda^{2k+1} (-1)^k
\Big| \Big).
\end{align*}
Let now $Z( x)$ be admissible function for the space $\Sub_\varphi
(\Omega).$  From Lemma~\ref{lmm12} it follows that
\begin{align*}
| I(t,x, \lambda) - I( t_1, x_1, \lambda|  &\leq
2 Z^{-1} \Big( \frac{1}{|x - x_1| }+ u_0\Big) Z \Big( \frac{|\lambda | }{2}+u_0 \Big)\\
&+ 2 \, Z^{-1} \Big(\frac{1}{|t- t_1|} +u_0 \Big)
Z \Big( \frac{1}{2} \Big| \sum_{k=1}^N a_k \lambda^{2k+1} (-1)^k
\Big| + u_0 \Big).
\end{align*}
Thus, we obtain:
\begin{align}
& \sup_{ \substack {|t- t_1| \leq h \\ |x-x_1| \leq h \\}}
 \tau_\varphi ( U(t, x) - U(t_1 x_1))  \leq
C_y \left(E (U(t, x) - U (t_1, x_1))^2 \right)^{1/2} \nonumber  \\
& \leq C_y \frac{2}{Z \big( \frac{1}{h} + u_0 \big)} \left[
\int_{-\infty}^\infty \int_{-\infty}^\infty \left( Z \Big(
\frac{|\lambda|}{2}  + u_0 \Big) + Z\Big( \frac{1}{2} \Big|
\sum_{k=1}^N a_k \lambda^{2k+1} (-1)^k \Big| + u_o\Big)
\right)\right.\nonumber  \\ & \times\left. \left( Z \big(
\frac{|\mu|}{2} +u_0\big) + Z\Big( \frac{1}{2} \Big| \sum_{k=1}^N
a_k \mu^{2k+1} (-1)^k \Big| + u_o \Big) \right)\, d\Gamma_y
(\lambda, \mu) \right]^{1/2} = C_y C_Z \frac{2}{Z \big(\frac{1}{h}
+u_0 \big)}.\label{(5.9)}
\end{align}
For $\kappa (u) = \sin u$ we have the same inequality. So, in the
notations of Theorem~\ref{sup_X}
$$ \sigma (h ) = 2 C_y C_Z \Big( Z\Big( \frac{1}{h} +u_0 \Big) \Big)^{-1},$$
that is,
$$ \sigma^{(-1)} (v) = \Big( Z^{(-1)} \Big( \frac{2 C_y C_Z}{v}\Big) - u_0 \Big)^{-1},
\quad 0 < v < \frac{2 C_y C_Z}{Z (\frac{1}{\varkappa} + u_0)} =
\gamma_0.$$
We now can conclude that conditions \eqref{suptau3.16} and \eqref{intPsi3.16} of Theorem \ref{sup_X} hold true.

\end{proof}

\begin{ex}\label{ex_Gaussian_Z1}
 Let $ y=\{y(u), u \in R\}$ be a centered Gaussian random process. Then $C_y = 1$,
$ \varphi (x) = \frac{x^2}{2}$, $ \varphi^{*} (x) = \frac{x^2}{2} $,
$\Psi(x) = \frac{1}{\sqrt{2}} x^{1/2}.$
Consider the following admissible function
$$ Z(u) = \ln^\alpha (u+1), \quad u \geq 0, \quad \alpha > 1/2.$$
In this case
$$ u_0 = e^{\alpha} - 1, \quad Z^{(-1)} (v) = \exp\left \{ v^{\frac{1}{\alpha}}\right\} - 1,
\quad Z(v + u_0) = \ln^\alpha (v + e^{\alpha}), $$
\vspace*{-0.3cm}
\begin{align}
C_Z^2 & = \int_{-\infty}^\infty \int_{-\infty}^\infty \Big( \ln^\alpha \Big(
\frac{|\lambda|}{2} + e^{\alpha} \Big) + \ln^\alpha \Big( \frac{1}{2}
\Big| \sum_{k=1}^N a_k \lambda^{2k+ 1} (-1)^k \Big| + e^{\alpha} \Big) \Big)
\nonumber   \\
& \times \Big( \ln^\alpha \Big(
\frac{|\mu|}{2} + e^{\alpha} \Big) + \ln^\alpha \Big( \frac{1}{2}
\Big| \sum_{k=1}^N a_k \mu^{2k+ 1} (-1)^k \Big| + e^{\alpha} \Big)
 \Big) \, d|\Gamma_y (\lambda, \mu)|.
\end{align}
 The above integral converges if the following integral converges
 \begin{equation}\label{int_ex1}
 \int_{-\infty}^\infty \int_{-\infty}^\infty \ln^\alpha(|\lambda | + e^{\alpha})
 \ln^\alpha(|\mu | + e^{\alpha}) \, d|\Gamma_y (\lambda, \mu)|.
 \end{equation}

That is, if condition (\ref{int_ex1}) holds true, then Theorem~\ref{thm_sup_solution}
holds.  It follows from (\ref{I_varphi}) that
\begin{align*}
\hat{I}_\varphi (\delta) & = \int_0^\delta \frac{1}{\sqrt{2}}
\left( \ln \left[\Big( \frac{b-a}{2} \Big( \exp \Big\{
 \Big(\frac{2 C_Z}{s} \Big)^{\frac{1}{\alpha}}\Big\} - e^{\alpha} \Big)
 + 1 \Big) \right.\right.   \\
& \left.\left.\times \Big( \frac{c-d}{2} \Big( \exp \Big\{ \Big( \frac{2 C_Z}{s}
  \Big)^{\frac{1}{\alpha}}\Big\} - e^{\alpha} \Big) +1 \Big)
  \right]\right)^{\frac{1}{2}} \, ds.
\end{align*}
Let now $ \frac{c-a}{2}\, e^{\alpha} > 1$ and $ \frac{b -a}{2} e^{\alpha} > 1$,
then
\begin{align}
\hat{I}_\varphi (\delta) & \leq \int_0^\delta \frac{1}{\sqrt{2}}
\Big( \ln \Big( \frac{c-d}{2} \frac{b-a}{2} \exp \Big\{ 2
\Big( \frac{2 C_Z}{s} \Big)^{\frac{1}{\alpha}} \Big\} \Big)
\Big)^{\frac{1}{2}} \, ds \nonumber  \\
& = \frac{1}{\sqrt{2}} \int_0^\delta
 \ln \Big( \frac{(c-d)(b-a)}{4} \Big)^{\frac{1}{2}} \, ds
+ \frac{1}{\sqrt{2}} \int_0^\delta
\Big( \frac{2 C_Z}{s}\Big)^{\frac{1}{2 \alpha}} \, ds
\nonumber   \\
& = \frac{\delta}{\sqrt{2}} \Big( \ln \Big( \frac{(c-d)(b-a)}{4} \Big)\Big)^{\frac{1}{2}}
+ \frac{\delta}{\sqrt{2}} \Big( \frac{2C_Z}{\delta} \Big)^{\frac{1}{2 \alpha}}
 {\Big(1 - \frac{1}{2 \alpha}\Big)^{-1}} .
\end{align}

It follows from (\ref{P_sup_U}) that in this case for
$$ u > \frac{2 \hat{I}_{\varphi} (\min (\theta \Gamma, \gamma_0 ) )}{\theta(1-\theta)}. $$
\begin{equation}
P \big\{ \sup_{\substack{a \leq t \leq b, \\
              c \leq x \leq d}}
| U(t, x)| > u \big\} \leq \exp \Big\{ -\frac{1}{2} \Big( \frac{1}{\Gamma}
\Big( u (1- \theta) - \frac{2}{\theta}
\hat{I}_\varphi ( \min (\theta \Gamma, \gamma_0))
\Big) \Big)^2 \Big\}.
\end{equation}
$$ \gamma_0 = \frac{2 C_Z}{\ln^\alpha \big( \frac{1}{\varkappa} + e^{\alpha} \big)} $$
If $\theta$ is such that $\theta \Gamma < \gamma_0 $, \;
 $ \big( \theta < \frac{\gamma_0}{\Gamma} \big) $ then for
 $$ u > \sup_{ 0 < \theta < \frac{\gamma_0}{\Gamma}}
 \frac{2 \hat{I}_\varphi (\theta \Gamma )}{\theta(1-\theta)} $$ we get the estimate
\begin{equation}
P \big\{ \sup_{\substack{a \leq t \leq b, \\
              c \leq x \leq d}}
| U(t, x)| > u \big\} \leq \inf_{0 < \theta < \frac{\gamma_0}{\Gamma}}
\exp \Big\{ -\frac{1}{2} \Big( \frac{1}{\Gamma}
\Big( u(1- \theta) - \frac{2}{\theta} {\hat{I}_\varphi } (\theta \Gamma)
\Big) \Big)^2 \Big\},
\end{equation}

\noindent and if  $ \frac{\gamma_0}{\Gamma} >1 $  \; then for
$$ u > \sup_{0< \theta <1}
\frac{2 \hat{I}_\varphi (\theta \Gamma )}{\theta(1-\theta)} $$
we get
\begin{equation}
P \big\{ \sup_{\substack{a \leq t \leq b, \\
              c \leq x \leq d}} |U (t, x)| > u \big\} \leq
\inf_{0 < \theta \leq 1} \exp \Big\{ -\frac{1}{2} \Big( \frac{1}{\Gamma}
\Big( u(1 - \theta) - \frac{2}{\theta} {\hat{I}_\varphi } (\theta \Gamma)
\Big) \Big)^2 \Big\}.
\end{equation}
\end{ex}

\begin{ex}
 Let $ y=\{y(u), u \in R\}$ be a centered Gaussian random process, as in example~\ref{ex_Gaussian_Z1}. Consider the
 admissible function $Z(u ) = | u|^\alpha $,
$0 < \alpha \leq 1.$ In this case
$$ u_0 = 0, \quad Z^{(-1)} (u) = u^{\frac{1}{\alpha}}, \quad u > 0 $$
\begin{align}
C_Z^2 & = \int_{-\infty}^\infty \int_{-\infty}^\infty \Big(
\Big(\frac{|\lambda |}{2} \Big)^\alpha + \Big| \frac{1}{2} \sum_{k=1}^N a_k
\lambda^{2k+1} (-1)^k \Big|^\alpha \Big)   \nonumber  \\
& \times \Big( \Big(\frac{|\mu |}{2} \Big)^\alpha + \Big| \frac{1}{2} \sum_{k=1}^N a_k
\mu^{2k+1} (-1)^k \Big|^\alpha \Big) \, d|\Gamma_y (\lambda , \mu)|  \nonumber \\
& \leq \int_{-\infty}^\infty \int_{-\infty}^\infty  \frac{1}{2^{2\alpha}}
\Big(|\lambda |^\alpha + \Big( \sum_{k=1}^N  |a_k|
|\lambda|^{2k+1} \Big)^\alpha \Big) \nonumber  \\
& \times \Big(|\mu |^\alpha + \Big( \sum_{k=1}^N  |a_k|
|\mu|^{2k+1} \Big)^\alpha \Big) \, d|\Gamma_y (\lambda, \mu)|.
\end{align}
This integral converges if the next integral converges
\begin{equation}\label{int_ex2}
\int_{-\infty}^\infty \int_{-\infty}^\infty  |\lambda \mu|^{(2N+1) \alpha}
\, d|\Gamma_y (\lambda, \mu)| < \infty
\end{equation}
That is, if condition (\ref{int_ex2}) holds, then theorem~\ref{thm_sup_solution} holds.
It follows from (\ref{I_varphi}) that
\begin{equation}
{\hat I}_\varphi (\delta) = \frac{1}{\sqrt{2}} \int_0^ \delta \Big( \ln \Big[
\Big( \frac{b-a}{2} \Big( \frac{2C_Z}{s} \Big)^{\frac{1}{\alpha}} +1 \Big)
\Big(\frac{c-d}{2} \Big( \frac{2C_Z}{s} \Big)^{\frac{1}{\alpha}} +1 \Big)
\Big]\Big)^{\frac{1}{2}} \, ds
\end{equation}
Since for $0 \leq \beta <1$, $ x>0,\, y>0$
\begin{align*}
\ln((1 +x)(1 + y)) & = \frac{1}{\beta} \ln [( 1 + x)( 1+ y)]^{\beta}
= \frac{1}{\beta} \big[\ln ( 1 + x)^\beta  + \ln ( 1+ y)^{\beta} \big]   \\
& \leq \frac{1}{\beta} \big[\ln ( 1 + x^\beta)  + \ln( 1+ y^{\beta}) \big]
\leq \frac{1}{\beta} \big( x^\beta  + y^{\beta} \big)
\end{align*}
in case of $\beta < \alpha$ we obtain
\begin{align}
\hat{I}_\varphi (\delta ) & \leq \frac{1}{\sqrt{2}} \int_0^\delta
\Big( \frac{1}{\beta} \Big( \frac{b- a}{2}
 \Big( \frac{2 C_Z}{s} \Big)^{\frac{1}{\alpha}} \Big)^\beta +
\frac{1}{\beta} \Big( \frac{c- d}{2}
 \Big( \frac{2 C_Z}{s} \Big)^{\frac{1}{\alpha}} \Big)^\beta
\Big)^{1/2} \, ds    \nonumber   \\
& \leq \frac{1}{\sqrt{2 \beta}} \Big(
\int_0^\delta
 \Big( \frac{b- a}{2}
 \Big( \frac{2 C_Z}{s} \Big)^{\frac{1}{\alpha}} \Big)^{\frac{\beta}{2}} \,ds
  + \int_0^\delta
 \Big( \frac{c- d}{2}
 \Big( \frac{2 C_Z}{s} \Big)^{\frac{1}{\alpha}}
 \Big)^{\frac{\beta}{2}} \, ds \Big)   \nonumber  \\
& =\frac{1}{\sqrt{2 \beta}}
  \int_0^\delta \Big( \Big (\frac{b- a}{2}\Big)^{\frac{\beta}{2}}
 \Big( \frac{2 C_Z}{s} \Big)^{\frac{\beta}{2\alpha}} +
 \Big( \frac{c- d}{2} \Big)^{\frac{\beta}{2}}
 \Big( \frac{2 C_Z}{s} \Big)^{\frac{\beta}{2\alpha}}
 \Big) \, ds     \nonumber  \\
 & = \frac{1}{\sqrt{2 \beta}}\big( 2 C_Z \big)^{\frac{\beta}{2 \alpha}}
  \, \delta^{(1- \frac{\beta}{2 \alpha})}
\frac{1}{1- \frac{\beta}{2 \alpha}}
\Big[ \Big( \frac{b- a}{2}\Big)^{\frac{\beta}{2}} +
\Big( \frac{c- d}{2}\Big)^{\frac{\beta}{2}}\Big] =
\hat{I}_\varphi (\delta, \beta ).
\end{align}
It follows from (\ref{P_sup_U})  that in this case for
$$ u > \frac{2 \hat{I}_{\varphi} (\min (\theta \Gamma, \gamma_0), \beta )}{\theta (1- \theta )},
\quad \gamma_0 = 2 C_Z \varkappa^\alpha $$ we get the estimate
\begin{equation}
P\big\{ \sup_{\substack {a\leq t \leq b \\
c \leq x <d \\}}  | U(t, x)| > u \big\}
  \leq \inf_{\theta,\beta} \exp \Big\{ -\frac{1}{2} \Big( \frac{1}{\Gamma}
 \Big(u(1 - \theta ) - \frac{2}{\theta}
 {\hat I}_\varphi (\min ( \theta\Gamma, \gamma_0), \beta )\Big) \Big)^2 \Big\}.
\end{equation}
\end{ex}

\begin{ex}
Let $y=\{y( u), \, u \in R\}$ be a $\varphi$-sub-Gaussian random process
 with
\begin{equation}
\varphi (x) =
\begin{cases}
\frac{ x^2}{\alpha},  \quad | x| \leq 1, \alpha > 2,
\\
\frac{| x|^\alpha}{\alpha},   \quad | x| \geq 1 , \alpha > 2. \\
\end{cases}
\nonumber
\end{equation}
In this case for $p$ such that $\frac{1}{p} + \frac{1}{\alpha} =1$
\begin{equation*}
{\varphi}^* (x) =
\begin{cases}
\alpha x^2/4,  \quad\quad \,\, 0 \leq |x| \leq {2 }/{\alpha},
\\
 |x| - 1/\alpha,   \quad  2/\alpha< |x| \leq 1, \\
 {x^p}/{p},     \quad \quad\quad |x| > 1, \\
\end{cases}
\nonumber \quad \text{and} \quad
\Psi (u) =
\begin{cases}
\frac{1}{\alpha^{1/2}} u^{1/2},  \quad\,\, 0< u < \frac{1}{\alpha},
\\
\frac{1}{\alpha^{1/\alpha}} u^{1 - \frac{1}{\alpha}},   \quad u > \frac{1}{\alpha}. \\
\end{cases}
\nonumber
\end{equation*}
Let $Z ( u)$ be admissible function for this space. Then for
$$ u > \max \Big( 1,\frac{2 {\hat I}_\varphi (\min (\theta \Gamma, \gamma_0))}{\theta(1-\theta)}
\Big),
  $$
\begin{equation*}
P \Big\{ \sup_{\substack{a \leq t \leq b \\
      c \leq x \leq d}}
 |U ( t, x) | > u \Big\} \leq 2
\exp\Big\{ -\frac{1}{p} \Big( \frac{1}{\Gamma} \Big( u ( 1-\theta) -
\frac{2}{\theta} {\hat I}_\varphi (\min (\theta \Gamma, \gamma_0)) \Big) \Big)^p \Big\}.
\end{equation*}
\end{ex}

\section{Rate of growth of the field \ $U(t, x)$\ \ on an unbounded  domain}

To evaluate the rate of growth of the  the field $U(t, x)$ on an unbounded domain, we will use the next theorem, which is a direct corollary of Theorem~3 from
the paper \cite{KST}.
\begin{thm}\label{thm_increase_rate_xi} Let $\{ \xi(x, t), (x, t) \in V\}$,
$ V =[-A, A]\times [0, + \infty]$, be a separable strictly
$\varphi$-sub-Gaussian random field with a determining constant $C_\xi $.
Assume that the following conditions are satisfied:
\begin{enumerate}
\item[1.]
$\{[b_k, b_{k+1}], k= 0, 1, \ldots\}$  is a family of such  segments
that $b_0=0$, $ 0 < b_k < b_{k+1} < +\infty $, $ k \ge1$,
$V_k = [-A, A] \times [b_k, b_{k+1}]$, $ \bigcup_k { V_k} = V;$

\item[2.]
 There exist the increasing continuous functions $\sigma_k (h)$, $h>0$,
 such that $\sigma_k (h) \to 0$ as $h \to 0$,
\begin{equation}
\sup_{\substack{|x - x_1|\leq h \\ |t - t_1| \leq h \\
(x, t), (x_1,t_1) \in V_k }}
\big( E (\xi (x, t) - \xi (x_1, t_1) )^2 \big)^{1/2} \le \sigma_k (h),
\end{equation}
 and for $ k = 0, 1, \ldots,$ and some  $\delta >0$
\begin{equation}\label{int_Psi}
\int_0^\delta  \Psi \left( \ln \left( \frac{1}{\sigma_k^{(-1)} (u)}
\right) \right) \, du < \infty;
\end{equation}

\item[3.]
$ c(t)$, $t \in R$,  is a continuous function such that $c( t) >0$,
$ t \in R$, and  let $c_k := \min \limits_{t \in [b_k, b_{k+1}]} c (t )$;

\item[{4}.]
$ \varepsilon_k =
C_\xi \sup\limits_{x, t \in V_k} \Big( E |\xi (x,t)|^2 \Big)^{1/2},
\quad k=0,1,2,\cdots \quad {\mbox and} \quad
\sup\limits_{k =\overline{0,\infty}} \frac{\varepsilon_k}{c_k} < \infty$

\item[{5}.]
For $\delta >0$
$$ I_{\varphi, k} (\delta) = C_\xi \int_0^{\frac{\delta}{C_\xi}} \Psi
\left( \ln \left( \frac{A}{{\sigma_k^{(-1)}} (u)} +1 \right) +
\ln \left( \frac{b_{k+1} - b_k}{{2 \sigma_k^{(-1)}} (u)} +1 \right)
\right) \, du $$
 and for some $\theta$, $ 0 < \theta < 1 ,$  $\sup\limits_{k = \overline{0, \infty}}
\frac{I_{\varphi, k} (\theta \varepsilon)}{c_k} < \infty$.

\item[{6}.]
 The series
$ \sum\limits_{k=0}^\infty \exp \Big\{ -\varphi^{*} \Big(
 \frac{s c_k (1-\theta)}{2 \varepsilon_k} \Big) \Big\}$
 converges for some $s$  such that
 $ \sup\limits_{k = \overline{0, \infty}}
 \frac{4 \varepsilon_k}{ c_k (1 - \theta)} < s < \frac{u}{2},$ where $u$ satisfies \eqref{uu}.
 \end{enumerate}
Then the increase rate of the field $\xi(x, t)$ can be estimated as follows:
$$ P \left\{ \sup \frac{|\xi(x, t)|}{ c(t)} > u \right\} \leq
2 \exp \left\{ - \varphi^{*} \left(\frac{u}{s} \right) \right\} \times
\sum_{k=0}^\infty \exp \Big \{ -\varphi^{*} \Big(
 \frac{s c_k (1- \theta)}{2 \varepsilon_k} \Big) \Big\}  =: 2 A(u ) $$
  for
\begin{equation}\label{uu} u > \sup_{k = \overline{0,\infty}}
\frac{I_{\varphi, k} (\theta \varepsilon_k)}{c_k} \frac{4}{\theta (1-\theta)}\,.\end{equation}
\end{thm}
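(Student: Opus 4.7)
The plan is to reduce the problem on the unbounded strip $V$ to countably many applications of Theorem~\ref{sup_X} on the bounded pieces $V_k$, and then combine them via a union bound. Since $c(t)\ge c_k$ on $V_k$ and $V=\bigcup_k V_k$, one has
\begin{equation*}
P\Big\{\sup_{(x,t)\in V}\frac{|\xi(x,t)|}{c(t)}>u\Big\}\le\sum_{k=0}^{\infty}P\Big\{\sup_{(x,t)\in V_k}|\xi(x,t)|>uc_k\Big\},
\end{equation*}
so the task reduces to producing a suitable exponential bound on each summand.

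Fix $k$. The strict $\varphi$-sub-Gaussianity of $\xi$ with determining constant $C_\xi$ yields $\tau_\varphi(\xi(x,t)-\xi(x_1,t_1))\le C_\xi(\mathsf{E}(\xi(x,t)-\xi(x_1,t_1))^2)^{1/2}$; combined with hypothesis~2 this gives $\sup\tau_\varphi(\xi(x,t)-\xi(x_1,t_1))\le C_\xi\sigma_k(h)$ on $V_k$, with the metric-entropy integral finite by \eqref{int_Psi}, and $\sup_{V_k}\tau_\varphi(\xi)\le\varepsilon_k$ by hypothesis~4. Applying Theorem~\ref{sup_X} to $\xi|_{V_k}$ (with the $u$ there replaced by $uc_k$) — the threshold $uc_k>2I_{\varphi,k}(\min(\theta\varepsilon_k,\gamma_{0,k}))/(\theta(1-\theta))$ being satisfied uniformly in $k$ by condition \eqref{uu} — one obtains
\begin{equation*}
P\Big\{\sup_{V_k}|\xi(x,t)|>uc_k\Big\}\le 2\exp\Big\{-\varphi^{*}\Big(\tfrac{1}{\varepsilon_k}\bigl(uc_k(1-\theta)-\tfrac{2}{\theta}I_{\varphi,k}(\theta\varepsilon_k)\bigr)\Big)\Big\}.
\end{equation*}
The same assumption \eqref{uu} forces $(2/\theta)I_{\varphi,k}(\theta\varepsilon_k)\le uc_k(1-\theta)/2$, so the argument of $\varphi^{*}$ is at least $uc_k(1-\theta)/(2\varepsilon_k)$.

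It remains to split this argument and produce the product form appearing in the statement. Since $\varphi^{*}$ is convex with $\varphi^{*}(0)=0$, it is superadditive on $[0,\infty)$: $\varphi^{*}(a+b)\ge\varphi^{*}(a)+\varphi^{*}(b)$. Decompose
\begin{equation*}
\frac{uc_k(1-\theta)}{2\varepsilon_k}=\frac{u}{s}+\frac{sc_k(1-\theta)}{2\varepsilon_k}+R_k,
\end{equation*}
with $R_k\ge 0$ precisely when $s(u-s)(1-\theta)/(2u)\ge\varepsilon_k/c_k$; the range $4\sup_k(\varepsilon_k/c_k)/(1-\theta)<s<u/2$ in hypothesis~6, together with the uniform bound $\sup_k\varepsilon_k/c_k<\infty$ from hypothesis~4, guarantees this uniformly in $k$. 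Superadditivity then yields the per-piece estimate $2\exp\{-\varphi^{*}(u/s)\}\exp\{-\varphi^{*}(sc_k(1-\theta)/(2\varepsilon_k))\}$, and summing in $k$ while factoring $\exp\{-\varphi^{*}(u/s)\}$ out produces the announced inequality; convergence of the residual series is hypothesis~6 itself.

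The main technical obstacle is the calibration of the splitting parameter $s$: one needs $s$ small enough that $u/s$ drives the leading exponential decay in $u$, yet large enough that $sc_k(1-\theta)/(2\varepsilon_k)$ grows fast enough in $k$ to make the residual series converge, and moreover the splitting must be valid simultaneously for every $k\ge 0$. The two-sided window $s\in(4\sup_k\varepsilon_k/c_k/(1-\theta),u/2)$ in hypothesis~6 is exactly what achieves this compromise, and the nonemptiness of that window for the prescribed range of $u$ is built into \eqref{uu}.
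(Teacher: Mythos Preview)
Your argument is correct and complete: the decomposition into $V_k$'s followed by a union bound, the application of Theorem~\ref{sup_X} on each piece, and the superadditivity trick $\varphi^*(a+b)\ge\varphi^*(a)+\varphi^*(b)$ to factor the exponential are exactly the right mechanism, and your verification that $R_k\ge 0$ under the two-sided constraint on $s$ is clean. One minor inaccuracy in your final paragraph: the nonemptiness of the window for $s$ is not a consequence of \eqref{uu} but is rather part of hypothesis~6 itself (the existence of an $s$ in that range is assumed there); \eqref{uu} only ensures the thresholds of Theorem~\ref{sup_X} are met on every $V_k$.

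As for comparison with the paper: the paper does not prove this theorem at all---it simply records it as a direct corollary of Theorem~3 in \cite{KST}. Your proof is therefore a self-contained reconstruction, built entirely from the ingredients already present in the paper (Theorem~\ref{sup_X} plus convexity of $\varphi^*$), and it is almost certainly the same argument that underlies the cited result in \cite{KST}. What your approach buys is transparency: it makes visible \emph{why} the two-sided window on $s$ and the uniform bound $\sup_k\varepsilon_k/c_k<\infty$ are needed, and exactly where the factor $4/(\theta(1-\theta))$ in \eqref{uu} originates (halving the argument of $\varphi^*$ after subtracting the entropy term, then halving $u$ to leave room for $s$).
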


Applying Theorem \ref{thm_increase_rate_xi}, we are able to evaluate the behavior of the field $U(t,x)$.

\begin{thm}\label{thm_increase_rate_U}
Let $y=\{y(u), u \in R\} $ be a strictly $\varphi$-sub-Gaussian random process
with a determining constant $C_y$ and
$$ U(t, x) = \int_{-\infty}^\infty I (t, x, \lambda ) \, dy(u), \quad
\textit{where} \quad (x,t) \in V, \; V= [-A, A] \times [0, +\infty], \;
A > 0,$$
$I (t,x, \lambda )$ is given in theorem~\ref{thm_solution} in (\ref{Itxlm}). Assume that $U(t, x)$
exists and is continuous with probability one ( this condition holds
if the conditions of the theorems~\ref{thm_solution} and \ref{thm_sup_solution} hold).

Let $ E y( t) y(s) = \Gamma_y (t, s),$  $ Z(u )$ be an admissible
function for the space $\Sub_\varphi (\Omega )$,  the integral
\begin{eqnarray}
C_Z^2 & =& \int_{-\infty}^\infty \int_{-\infty}^\infty \left(  Z \Bigl(
\frac{|\lambda |}{2} +u_0 \Bigr) + Z \Big( \frac{1}{2} \Big|
\sum_{k=1}^N a_k \lambda^{2 k + 1} (-1)^k \Big| + u_0 \Big) \right)
  \nonumber  \\
  && \times   \left(Z  \Big(
\frac{|\mu |}{2} +u_0 \Big) + Z \Big( \frac{1}{2} \Big|
\sum_{k=1}^N a_k \mu^{2 k + 1} (-1)^k \Big| + u_0 \Big)
 \right)
 \, du|\Gamma (\lambda, \mu)|
\end{eqnarray}
converge and the following conditions hold:
\begin{enumerate}
\item[{1}.]  $\{[b_k, b_{k+1}], \, k= 0, 1, \ldots \} $ is a family
 of such  segments
that $b_0=0$, $ 0 < b_k < b_{k+1} < +\infty $, $ k = 1, 2, \ldots$,
$b_{k+1} - b_k \geq 2 A,$
$V_k = [-A, A] \times [b_k, b_{k+1}]$;

\item[{2}.] $ c(t),  t>0$ is a continuous function such
that
$ c( t ) > 0$, $t \leq 0$, and let
$c_k = \min_{t \in [b_k, b_{k+1}]} c(t)$;

\item[{3}.] $$ \varepsilon_k = C_y \sup_{x, t \in V_k}
\big( E ( | U(t, x)|)^2 \big)^{1/2}, \quad k = 0, 1, 2,\ldots, \quad
\textit{and} \quad \sup_{ k= \overline{0, \infty}} \frac{\varepsilon_k}{c_k}
 < \infty;$$

\item[{4}.] Let for $\delta >0$
\begin{align}
\hat{I}_{\varphi, k} (\delta)  = C_{\xi} \int_0^\delta \Psi
 \Bigg( &\ln \left( A \Big( Z^{(-1)} \Big( \frac{2 C_Z C_y}{s} - u_0 \Big)
+ 1\Big) \right)    \nonumber \\
 +&\ln \left( \frac{b_{k+1} -b_k}{2} \Big( Z^{(-1)} \Big(
\frac{2 C_Z C_y}{s}\Big) - u_0 \Big) +1 \right) \Bigg) \, ds
\end{align}
and for some $\theta, \; 0 < \theta < 1$
$$ \sup_{k=\overline{0, \infty}}
\frac{\hat{I}_{\varphi, k} (\theta \varepsilon_k)}{c_k} < \infty.$$

\item[{5}.] The series
$ \sum\limits_{k=0}^\infty \exp \Big\{-\varphi^{*} \Big(
\frac{s c_k (1-\theta)}{2 \varepsilon_k} \Big) \Big\} $
converges for some $s$ such  that
$ \sup\limits_{k=\overline{0, \infty}} \frac{4 \varepsilon_k}{c_k (1- \theta)}
 < s < \frac{u}{2}. $
 \end{enumerate}
 Then for
\begin{equation}
  u> \sup_{k=\overline{0, \infty}}
 \frac{I_\varphi (\theta \varepsilon_k)}{c_k}
 \frac{4}{\theta (1 - \theta)}
\end{equation}
we get that the increase rate of $U (t, x)$ can be estimated as follows
\begin{equation}
P\Big\{ \sup_{(x, t) \in V}  \frac{|U (t, x)|}{c(t)} > u \Big\}
\leq 2 \exp\Big\{ - \varphi^{*} \Big( \frac{u}{s} \Big) \Big\}
\sum_{k=0}^\infty \exp\Big\{ -\varphi^{*} \Big(
\frac{s c_k (1- \theta)}{2 \varepsilon_k} \Big) \Big\} =
2 A_U ( u ).
\end{equation}
\end{thm}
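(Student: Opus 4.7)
The plan is to verify each of the numbered hypotheses of Theorem~\ref{thm_increase_rate_xi} for the field $\xi(x,t) := U(t,x)$ on $V = [-A,A]\times[0,+\infty)$, and then transcribe its conclusion. The statements of the two theorems have been aligned so that hypotheses 3, 4, 6 of Theorem~\ref{thm_increase_rate_xi} coincide (after relabeling $\varepsilon_k$, $c_k$, $s$) with hypotheses 3, 4, 5 of the present theorem; therefore the only substantive work is to supply the admissible family $\{\sigma_k(h)\}$ required by condition 2 of Theorem~\ref{thm_increase_rate_xi} and to identify its associated entropy integral with the quantity $\hat I_{\varphi,k}$ defined in hypothesis 4 here.

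First I would record the structural properties of $U(t,x)$. Because $U(t,x) = \int_{\R} I(t,x,\lambda)\,dy(\lambda)$ is a stochastic integral against the strictly $\varphi$-sub-Gaussian process $y$ with deterministic kernel $I(t,x,\cdot)$, Example~\ref{ex_kernel} yields that $U$ is itself strictly $\varphi$-sub-Gaussian with the same determining constant $C_y$. The assumed a.s.\ continuity supplies a separable modification on each $V_k = [-A,A]\times[b_k,b_{k+1}]$, and the partition $\{[b_k, b_{k+1}]\}$ together with the gap condition $b_{k+1}-b_k\ge 2A$ produces condition~1 of Theorem~\ref{thm_increase_rate_xi}.

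Next I would transport the modulus-of-continuity estimate used inside the proof of Theorem~\ref{thm_sup_solution} from a single bounded rectangle to each $V_k$ uniformly. By strict $\varphi$-sub-Gaussianity, $\tau_\varphi(U(t,x)-U(t_1,x_1)) \le C_y\bigl(\E(U(t,x)-U(t_1,x_1))^2\bigr)^{1/2}$, and the computation culminating in (5.9) uses only the admissibility of $Z$ together with the pointwise bound on $|I(t,x,\lambda)-I(t_1,x_1,\lambda)|$ obtained from Lemma~\ref{lmm12}; it does not depend on the particular $t$-interval. Hence, for every $k$,
\[
\sup_{\substack{|t-t_1|\le h,\;|x-x_1|\le h\\ (t,x),(t_1,x_1)\in V_k}}
\tau_\varphi\bigl(U(t,x)-U(t_1,x_1)\bigr)\;\le\; \frac{2\,C_y C_Z}{Z\bigl(\tfrac{1}{h}+u_0\bigr)} \;=:\;\sigma_k(h),
\]
a function independent of $k$, monotonically increasing, and tending to $0$ as $h\to 0$ by admissibility of $Z$. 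Inverting gives $\sigma_k^{(-1)}(v) = \bigl(Z^{(-1)}(2C_y C_Z/v)-u_0\bigr)^{-1}$.

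Finally I would substitute this $\sigma_k^{(-1)}$ into the entropy integral $I_{\varphi,k}$ of Theorem~\ref{thm_increase_rate_xi}, taking $x$-half-width $A$ and $t$-half-width $(b_{k+1}-b_k)/2$; this produces exactly the quantity $\hat I_{\varphi,k}(\delta)$ of hypothesis~4, so admissibility of $Z$ gives the convergence \eqref{int_Psi} while hypothesis~4 supplies the uniform-in-$k$ bound on $\hat I_{\varphi,k}(\theta\varepsilon_k)/c_k$. With all six conditions of Theorem~\ref{thm_increase_rate_xi} verified (with $C_\xi=C_y$), its conclusion immediately yields the claimed tail bound for $\sup_{(x,t)\in V}|U(t,x)|/c(t)$. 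The main obstacle is the bookkeeping in the last identification: the factors of $A$ and $(b_{k+1}-b_k)/2$ inside the logarithm, the placement of the constant $C_\xi$ in front of the integral versus absorbed into $\sigma_k^{(-1)}$, and the role of the separation $b_{k+1}-b_k\ge 2A$ in keeping the $t$-direction the ``thick'' one in each $V_k$, all must be tracked carefully so that the two entropy integrals agree literally.
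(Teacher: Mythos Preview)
Your proposal is correct and follows essentially the same route as the paper: both reduce the statement to an application of Theorem~\ref{thm_increase_rate_xi} by invoking continuity for separability, importing the increment bound \eqref{(5.9)} to produce $\sigma_k(h)=2C_yC_Z\bigl(Z(\tfrac{1}{h}+u_0)\bigr)^{-1}$ and its inverse, and using admissibility of $Z$ to verify \eqref{int_Psi}. Your write-up is in fact more explicit than the paper's about the identification of $I_{\varphi,k}$ with $\hat I_{\varphi,k}$ and about the role of the hypotheses, but the underlying argument is the same.
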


\medskip

\begin{cor}\label{cor_53} Let the assumptions of Theorem~\ref{thm_increase_rate_U} hold true.
Then there exists a random variable $\xi$  such that $$P \{ \xi > u\} \leq 2 A_U (u)$$ for
 $$
 u > \sup_{k=\overline{0, \infty}}
 \frac{I_\varphi (\theta, \varepsilon_k)}{c_k}
\frac{u}{\theta (1 - \theta)}  $$
 and
$$|U(x, t)| < \xi c(t)$$ with probability one.
\end{cor}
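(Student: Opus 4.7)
The natural candidate is to set
$$\xi := \sup_{(x,t) \in V} \frac{|U(t,x)|}{c(t)},$$
and to check that $\xi$ inherits the tail bound of Theorem~\ref{thm_increase_rate_U} and that the pathwise inequality can be read off directly from the definition.

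First I would verify that $\xi$ is a genuine random variable. By assumption $U(t,x)$ is continuous on $V$ with probability one, and $c(t)$ is continuous and strictly positive on $[0,\infty)$, so the map $(x,t)\mapsto |U(t,x)|/c(t)$ is continuous on $V$ on an event of full measure. Since $V$ is separable, the supremum over $V$ of this continuous function coincides almost surely with its supremum over any fixed countable dense subset of $V$, and the latter is measurable as a countable supremum of measurable functions. Thus $\xi$ is measurable on the full-measure event where $U$ is continuous, and may be defined arbitrarily off this event.

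Next I would apply Theorem~\ref{thm_increase_rate_U} directly: under the stated hypotheses, for $u$ in the admissible range,
$$P\{\xi > u\} \;=\; P\Big\{\sup_{(x,t)\in V}\frac{|U(t,x)|}{c(t)} > u\Big\} \;\leq\; 2 A_U(u),$$
which is exactly the required tail estimate. Because $A_U(u)$ carries an outer factor $\exp\{-\varphi^*(u/s)\}$ and $\varphi^*(v)\to\infty$ as $v\to\infty$ by definition of an $N$-function, we obtain $A_U(u)\to 0$ as $u\to\infty$, whence $P\{\xi<\infty\}=1$.

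Finally, on the full-measure event $\{\xi<\infty\}$ the bound $|U(t,x)|\leq \xi\, c(t)$ holds for every $(x,t)\in V$ by the very definition of $\xi$ as a supremum; the strict inequality claimed in the corollary can be secured either by replacing $\xi$ with $\xi+\delta$ for any fixed $\delta>0$ (which only translates the tail bound by a constant and preserves the conclusion) or by reading ``$<$'' as ``$\leq$'' on a set of probability one. There is no substantive analytic obstacle: the entire statement is a repackaging of Theorem~\ref{thm_increase_rate_U}, and the only mildly delicate point is the measurability step, which is handled by the almost sure continuity of $U$ together with separability of $V$.
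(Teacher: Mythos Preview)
Your argument is correct and is precisely the natural justification: the paper states Corollary~\ref{cor_53} without proof, treating it as an immediate consequence of Theorem~\ref{thm_increase_rate_U}, and your choice $\xi=\sup_{(x,t)\in V}|U(t,x)|/c(t)$ together with the measurability check and the observation $A_U(u)\to 0$ is exactly what a reader would supply. Your remark on the strict-versus-nonstrict inequality is also apt; the paper's ``$<$'' should be read as ``$\leq$'' (or one passes to $\xi+\delta$ as you suggest).
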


\noindent {\it Proof of Theorem~\ref{thm_increase_rate_U}.} This theorem follows
from Theorem~\ref{thm_increase_rate_xi}. Indeed the process $U(t, x) $ is continuous,
that is, this process is separable. From \eqref{(5.9)} it follows that
$$  \sigma_k (h) = 2 C_y C_Z \Big( Z \Big( \frac{1}{h_k} +u_0 \Big)
\Big)^{-1} \quad \textrm {for} \quad 0< h_k < b_{k+1} - b_k $$
and
$$ \sigma_k^{(-1)} (t) =
 \frac{1}{Z^{(-1)} \big( \frac{2C_y C_Z}{t}\big) -u_0}\,, \quad
 t< \gamma_{k 0}, \quad
 \gamma_{k 0} = 2 C_y C_Z Z \Big( \frac{1}{b_{k+1} - b_k} + u_0 \Big).
 $$
Assumption (\ref{int_Psi}) holds, since
$$
\int_0^\delta \Psi \Big( \ln \Big( \frac{1}{\sigma_k^{(-1)}(u)}
\Big) \Big) \, du = \int_0^\delta \Psi \Big( \ln \Big(
Z^{(-1)} \Big( \frac{2C_y C_Z}{u} \Big) -u_0 \Big) \Big) \, du $$
and the function $Z( x )$ is admissible for the space
$\Sub_\varphi (\Omega)$.
\begin{flushright} $\square$
\end{flushright}

\begin{ex} Let $y=\{y( u ),u\in\R\}$ be a centered Gaussian random process,
$Z( u) = \ln^\alpha (u+1)$, $c(t) >0$ be an increasing  function and
$c_k =  c(b_k)$. In this case the process $U (t, s)$ is centered Gaussian
process as well, $C_y = 1$  and
\begin{align*}
\varepsilon_k & = \sup_{(x, t) \in V_k} \big( E ( U(t, x))^2 \big)^{1/2}   \\
& = \sup_{(x, t) \in V_k} \Big( \int_{-\infty}^\infty \int_{-\infty}^\infty
I(t, x, \lambda ) I (t, x, \mu ) \, d\Gamma (\lambda, \mu) \Big)^{\frac{1}{2}}
\\ & \leq \Big( \int_{-\infty}^\infty
\int_{-\infty}^\infty \, d|\Gamma(\lambda, \mu )|\Big)^{\frac{1}{2}} =
\Gamma^{\frac{1}{2}}
\end{align*}
Therefore, the assumption 3 of Theorem \ref{thm_increase_rate_U} holds true.

The series in the assumption 5 has now the following form:
\begin{equation}\label{57}
 \sum_{k=0}^\infty \exp \Big\{ - \frac{1}{2} \Big(
 \frac{s c_k (1- \theta)}{2\varepsilon_k} \Big)^2 \Big\} \leq
\sum_{k=0}^\infty \exp \Big\{ - \frac{1}{2} \Big(
 \frac{s c_k (1- \theta)}{2 \Gamma^{1/2}} \Big)^2 \Big\}
\end{equation}
and if the above series converges for $u > 2 s$ and for
\begin{equation}
u > \sup_{k = \overline{0,\infty}}
\frac{I_\varphi (\theta \Gamma^{\frac{1}{2}})}{c_k}
\frac{4}{\theta (1- \theta)} =
\frac{I_\varphi (\theta \Gamma^{\frac{1}{2}})}{c_0}
\frac{4}{\theta (1- \theta)}
\end{equation}
we have
\begin{equation}\label{59}
P \Big\{ \sup_{(x, t) \in V}
\frac{U(t, x)}{c(t)} > u \Big\}
\leq 2 \exp \Big\{ -  \frac{u^2}{2s^2} \Big\}
\sum_{k=0}^\infty \exp \Big\{ -\frac{1}{2}
\Big( \frac{s c_k (1- \theta)}{2 \Gamma^{\frac{1}{2}}} \Big)^2 \Big\}
= 2 \hat{A} (u).
\end{equation}
Let now $b_k =L e^k$ and $2 A < e (e- 1) L,$  that is,
$2 A < b_{k+1} - b_k$, $k= \overline{0, \infty}.$
Let
$$ c(t) = D_\delta \Big( \ln \Big( \ln \frac{t}{L} \Big) \Big)^{\frac{1}{2}}\, ,
\quad L > \frac{2 A}{e (e-1)} \, ,\quad
D_\delta = \frac{2 \Gamma^{\frac{1}{2}} (2 (1 + \delta))^{\frac{1}{2}}}{s (1 - \theta)}
$$
$\delta >0$ is any number, then series in (\ref{57}) converges, that is,
the inequality (\ref{59}) holds true.

From Corollary~\ref{cor_53} it follows that
 in this case with probability one $| U(t, x)| < c(t) \xi $, where
 $\xi $ is random variable such that $P\{ \xi > u \} \leq 2 \hat{A} (u)$ for
 $ u > {I_\varphi \big(\theta \Gamma^{\frac{1}{2}}\big)}
 \frac{4}{L\theta (1 -\theta)}.$
\end{ex}

\end{document}